\DeclareMathOperator\Loc{Loc}
\DeclareMathOperator\id{id}
\DeclareMathOperator\Sym{Sym}
\DeclareMathOperator\Gr{Gr}
\DeclareMathOperator\PPP{\mathbb P}
\DeclareMathOperator\h{\hbar}
\DeclareMathOperator\End{End}
\DeclareMathOperator\I{\mathcal I}
\DeclareMathOperator\cR{\mathcal R}
\DeclareMathOperator\C{\mathbb C}
\DeclareMathOperator\T{\mathbb T}
\DeclareMathOperator\X{\mathcal X}
\DeclareMathOperator\Hom{\mathrm Hom}
\DeclareMathOperator\pt{pt}
\DeclareMathOperator\Stab{Stab}
\DeclareMathOperator\Y{\mathcal Y}
\DeclareMathOperator\Grkn{\Gr_k\!\C^n}
\DeclareMathOperator\St{L}
\DeclareMathOperator\W{\mathbb W}
\DeclareMathOperator\Hb{\mathbb H}
\DeclareMathOperator{\Tr}{Tr}
\DeclareMathOperator{\Crit}{Crit}
\DeclareMathOperator\even{even}
\DeclareMathOperator\odd{odd}
\def\zz{\scalebox{.6}{(00)}}
\def\zo{\scalebox{.6}{(01)}}
\def\oz{\scalebox{.6}{(10)}}
\def\oo{\scalebox{.6}{(11)}}
\newcommand{\vin}{\rotatebox[origin=c]{90}{$\in$}}
\def\Xrn{\X^{(r)}_n}
\renewcommand*\env@matrix[1][*\c@MaxMatrixCols c]{%
  \hskip -\arraycolsep
  \let\@ifnextchar\new@ifnextchar
  \array{#1}}
\newtheorem{fact}{Fact}[section]
\newtheorem{lemma}[fact]{Lemma}
\newtheorem{theorem}[fact]{Theorem}
\newtheorem{definition}[fact]{Definition}
\newtheorem{example}[fact]{Example}
\newtheorem{rremark}[fact]{Remark}
\newenvironment{remark}{\begin{rremark} \rm}{\end{rremark}}
\newtheorem{proposition}[fact]{Proposition}
\def\aedg{NS-edge}
\def\bedg{D-edge}
\def\avr{NS-variety}
\def\bvr{D-variety}
\def\GL{ \mathrm{GL}}
\def\GLq{ \GL_q }
\def\GLqv#1{ \GLq(#1) }
\def\GLqn{ \GLqv{n}}
\def\GLv#1{ \GL(#1) }
\def\gl{ \mathrm{gl}}
\def\gl{\mathfrak {gl}}
\def\glv#1{ \gl(#1) }
\def\gln{ \glv{n}}
\def\glnmax{\glv{\nmax}}
\def\glnv{ \glv{\nv}}
\def\glsv#1#2{ \gl(#1|#2) }
\def\glmn{ \glsv{\gM}{\rN}}
\def\glzn{ \glsv{0}{\gN}}
\def\glnz{ \glsv{\gN}{0}}
\def\glN{ \glv{\gN}}
\def\xvo{ \xv_1 }
\def\xvt{ \xv_2 }
\def\nvo{ n_{\xvo}}
\def\nvt{ n_{\xvt}}
\def\GLn{ \GLv{n}}
\def\GLm{ \GLv{m}}
\def\GLN{ \GLv{\gN}}
\def\GLnmax{\GLv{\nmax}}
\def\gN{ N }
\def\GLN{ \GLv{\gN}}
\def\edg{ e }
\def\vrt{ v }
\def\rmst{\mathrm{st}}
\def\xX{ \mathcal{X}}
\def\xsm{\mathrm{s}}
\def\xWsm{ \xW^{\xsm}}
\def\xXsm{ \xX^{\xsm}}
\def\xXsmv#1{ \xXsm_{#1}}
\def\xXsmi{ \xXsmv{i}}
\def\xXsme{ \xXsmv{\edg}}
\def\xXsmi{ \xXsmv{i}}
\def\xXe{ \xX_{\edg}}
\def\xWe{ \xW_{\edg}}
\def\xWsme{ \xWsm_{\edg}}
\def\xmu{ \mu }
\def\xmui{ \xmu_i}
\def\xmuv{ \xmu_\vrt }
\def\rme{ \mathrm{e}}
\def\rmv{ \mathrm{v}}
\def\nv{ n_\vrt}
\def\GLnv{ \GLv{\nv}}
\def\GLVi{ \GLv{\xVi}}
\def\rmNS{ \mathrm{NS}}
\def\rmD{ \mathrm{D}}
\def\rmT{ \mathrm{T}}
\def\rmTs{ \rmT^* }
\def\cL{ \mathcal{L} }
\def\cLv#1#2{ \cL_{(#1,#2)}}
\def\cLXW {\cLv{\xX}{W}}
\def\tcL{ \widetilde{\cL} }
\def\tcLv#1#2{ \tcL_{(#1,#2)}}
\def\tcLXW {\tcLv{\xX}{W}}
\def\tcLXWv#1{\tcLv{\xX_{#1}}{\xW_{#1}}}
\def\tcLvrv#1{ \tcL_{#1}}
\def\tcLvr{ \tcLvrv{v}}
\def\hmrd{ /\!\!/ }
\def\xQ{ Q }
\def\xXQ{ \xX_\xQ }
\def\Gv#1{ G_{#1}}
\def\Gi{ \Gv{i}}
\def\Gimo{ \Gv{i-1}}
\def\xom{\omega}
\def\muv#1{ \xmu_{#1}}
\def\muvoe{ \muv{\xv_1,\xe}}
\def\muvte{ \muv{\xv_2,\xe}}
\def\rleft{ \mathrm{L}}
\def\rright{ \mathrm{R}}
\def\mulv#1{ \xmu^{\rleft}_{#1}}
\def\murv#1{ \xmu^{\rright}_{#1}}
\def\xmunv#1{ \xmu_{#1}}
\def\xmunn{ \xmunv{n}}
\def\xmunm{ \xmunv{m}}
\def\Xvv#1{ X_{#1}}
\def\Xvoe{ \Xvv{\xv_1}}
\def\Xvte{ \Xvv{\xv_2}}
\def\xW{ W }
\def\xv{ v }
\def\xe{ e }
\def\xn{ n }
\def\xnv#1{ \xn_{#1}}
\def\xni{ \xnv{i}}
\def\xnimo{ \xnv{i-1}}
\def\xnio{ \xnv{i+1}}
\def\xnz{ \xnv{0}}
\def\bfv{ \mathbf{\xv}}
\def\xXQ{ \xX_Q }
\def\bfe{ \mathbf{e}}
\def\hXQ{ \xX_{\bfe}}
\def\GQ{ G_Q }
\def\GQ{ G_{\mathbf{v}}}
\def\Critvv#1#2{ \Crit\bigl(#1;#2\bigr) }
\def\xxG{ G }
\def\xxg{ \mathfrak{g}}
\def\ism{ \xsm }
\def\ilg{ \mathrm{lg}}
\def\smcap{ \overset{\ism}{\cap} }
\def\lgcap{ \overset{\ilg}{\cap}}
\def\gQ{ \xxg_{\bfv}}
\def\xWsmet{ \xWsm_{\bfe}}
\def\Gvpr#1{$#1$-pair}
\def\Gpr{\Gvpr{\xxG}}
\def\xGprv#1#2{ (#1;#2) }
\def\xGprbv#1#2{ \bigl(#1;#2\bigr) }
\def\rmLg{\mathrm{LG}}
\def\fLg{ f_{\rmLg}}
\def\Lgv#1{ {#1}^{\rmLg} }
\def\Lgvv#1#2{ {#1}^{\rmLg,#2}}
\def\aedg{arrow edge}
\def\bedg{bow edge}
\def\avr{arrow variety}
\def\avrs{arrow varieties}
\def\bvr{bow variety}
\def\bvrs{bow varieties}
\def\xV{ V }
\def\xVnv#1{ \xV_{#1}}
\def\xVi{ \xVnv{i}}
\def\Homst{ \Hom^{\rmst}}
\def\xcn{ n }
\def\xcnLv#1{ \xcn_{\mathrm{L},#1}}
\def\xcnRv#1{ \xcn_{\mathrm{R},#1}}
\def\xcnv#1{ \xcn_{#1}}
\def\xcne{ \xcnv{\edg}}
\def\nvv#1{ n_{\xv_{#1}}}
\def\nvo{ \nvv{1}}
\def\nvt{ \nvv{2}}
\def\fesl{\mathcal{B}}
\def\feslv#1#2{ \fesl_{#2,#1}}
\def\feslnm{ \feslv{n}{m}}
\def\varr{\mathcal{A}}
\def\varrv#1#2{ \varr_{#2,#1}}
\def\varrnm{ \varrv{n}{m}}
\def\varrst{\varr^{\rmst}}
\def\varrstv#1#2{ \varrst_{#2,#1}}
\def\varrstnm{ \varrstv{n}{m}}
\def\Unm{U_{n,m}}
\def\Unm{ U_{m,n}}
\def\xw{ w }
\def\xwp{ \xw'}
\def\bfw{ \mathbf{\xw}}
\def\bfwp{ \bfw' }
\def\rmsym{ \mathrm{sym}}
\def\xR{ R }
\def\xRs{ \xR^{\rmsym}}
\def\xRv#1{ \xR_{#1}}
\def\xRsv#1{ \xRs_{#1}}
\def\xk{ k }
\def\xRk{ \xRv{\xk}}
\def\xRo{ \xRv{1}}
\def\xRbk{ \xRv{\bfhw} }
\def\xRsbk{ \xRsv{\bfhw}}
\def\xRbkz{ \xRv{\bfhw;\bfzp}}
\def\xRbkp{ \xRv{\bfhw;\bfhwp}}
\def\xRbo{ \xRv{\bfo}}
\def\xRfl{ \xR' }
\def\xRflv#1{ \xRfl_{#1}}
\def\xRflk{ \xRflv{\xk}}
\def\xRflo{\xRflv{1}}
\def\xWd{ \Lambda }
\def\hw{ k }
\def\hwp{ \hw' }
\def\bfhw{ \mathbf{\hw}}
\def\bfhwp{ \bfhw' }
\def\rM{ K }
\def\rMp{ \rM' }
\def\gM{ M }
\def\rN{ N }
\def\wV{ V }
\def\wVv#1#2{ \wV_{#1}^{#2}}
\def\wVbwk{ \wVv{\bfhw}{\bfw}}
\def\wVbwkp{ \wVv{\bfhw;\bfhwp}{\bfw;\bfwp}}
\def\xlq{linear quiver}
\def\sprd{separated}
\def\xQvv#1#2{ Q_{#1}^{#2}}
\def\xQbwk{ \xQvv{\bfhw}{\bfw}}
\def\xQbwkp{ \xQvv{\bfhw;\bfhwp}{\bfw;\bfwp}}
\def\xQbwpk{ \xQvv{\bfhw}{\bfw;\bfwp}}
\def\rarr{ \mathrm{arr}}
\def\rbow{\mathrm{bow}}
\def\Qa{ Q^{\rarr}}
\def\Qb{ Q^{\rbow}}
\def\xQav#1{ \Qa_{#1}}
\def\xQbv#1{ \Qb_{#1}}
\def\xQaw{ \xQav{\bfw}}
\def\xQawp{ \xQav{\bfw,\bfwp}}
\def\xQbw{ \xQbv{\bfhw}}
\def\xXvv#1#2{ \xX_{#1}^{#2}}
\def\xXbwk{ \xXvv{\bfhw}{\bfw}}
\def\xXbko{ \xXvv{\bfo}{\bfw}}
\def\xXbwpk{ \xXvv{\bfhw}{\bfw;\bfwp}}
\def\xXbwpko{ \xXvv{\bfo}{\bfw;\bfwp}}
\def\xYvv#1#2{ \mathcal{Y}_{#1}^{#2}}
\def\xYbwk{ \xYvv{\bfhw}{\bfw}}
\def\xFL{ \mathcal{F}}
\def\xFLv#1{ \xFL_{#1}}
\def\xFLbw{ \xFLv{\bfw}}
\def\xSSo{ \mathcal{S}}
\def\xSS{\widehat{\xSSo}}
\def\xSSv#1{ \xSS_{#1}}
\def\xSSbhw{ \xSSv{\bfhw}}
\def\xSSbo{ \xSSv{\bfo}}
\def\xSSov#1{ \xSSo_{#1}}
\def\xSSobhw{ \xSSov{\bfhw}}
\def\Ad{ \mathrm{Ad}}
\def\Adv#1{ \Ad_{#1}}
\def\xmrk{marked}
\def\NQ{ N_Q }
\def\xmv#1{ (#1) }
\def\xmimo{ \xmv{i-1}}
\def\xmi{ \xmv{i} }
\def\xmio{ \xmv{i+1} }
\def\xmz{ \xmv{0} }
\def\xmN{ \xmv{\NQ}}
\def\flF{ \mathcal{F}}
\def\flFv#1{ \flF_{#1}}
\def\flFw{ \flFv{\bfw}}
\def\flFwm{ \flFv{\bfwp,\bfw}}
\def\hflF{ \widehat{\flF} }
\def\hflFm{ \hflF_{\bfwp,\bfw} }
\def\Fb{ F_\bullet }
\def\xsV{ V }
\def\xsVFb{ \xsV(\Fb) }
\def\xphi{ \phi }
\def\nmax{ n_{\mathrm{max}}}
\def\sint{symplectic intersection}
\def\xab#1{ |#1| }
\def\bfo{ \mathbf{1}}
\def\bfz{ \mathbf{0}}
\def\bfzp{ \bfz' }
\def\xmuS{ \xmu_{\xSS}}
\def\xmuF{ \xmu_{\flF} }
\def\xRN{ R_{(\gN)}}
\def\smS{ \mathrm{S}}
\def\smSv#1{ \smS^{#1}}
\def\Pr{ \mathcal{P}}
\def\Prv#1{ \Pr_{#1}}
\def\Prbw{\Prv{\bfw}}
\def\Cm{ \C^{\ynt}}
\def\Cs{ \C^\times }
\def\Cse{ (\Cs)_\edg }
\def\Csbow{ (\Cs)_{\mathrm{bow}}}
\def\bfz{ \mathbf{z}}
\def\Mbz{ M_{\bfz}}
\def\rmnil{ \mathrm{nil}}
\def\xwnil{ w^{\rmnil}}
\def\bfwnil{ \bfw^{\rmnil}}
\def\gMnil{ \gM^{\rmnil}}
\def\gMnil{ m }
\def\IR{\mathbb{R}}
\def\IRv#1{ \IR^{#1}}
\def\IRte{ \IRv{10}}
\def\IRth{ \IRv{3}}
\def\IRtw{ \IRv{2}}
\def\IRo{ \IRv{1}}
\def\rmquiv{\mathrm{quiv}}
\def\rmcmn{ \mathrm{cmn}}
\def\rmNS{ \mathrm{NS}5}
\def\rmD{ \mathrm{D}5}
\def\IRq{ \IRo_{\rmquiv}}
\def\IRcmn{ \IRth_{\rmcmn}}
\def\IRtwcmn{ \IRtw_{\rmcmn}}
\def\IRthNS{ \IRth_{\rmNS}}
\def\IRtwNS{\IRtw_{\rmNS}}
\def\IRthD{ \IRth_{\rmD}}
\def\IRoD{ \IRo_{\rmDfr}}
\def\IRtwx{ \IRtw_{\rmx} }
\def\IRtwy{ \IRtw_{\rmy}}
\def\rmx{ \mathrm{x}}
\def\rmy{ \mathrm{y}}
\def\rmDfr{ \mathrm{D4}}
\def\rmNS{ \mathrm{NS5}}
\def\Dx{$\rmDfr_\rmx$}
\def\Dy{$\rmDfr_\rmy$}
\def\NSx{$\rmNS_\rmx$}
\def\NSy{$\rmNS_\rmy$}
\title{New quiver-like varieties and Lie superalgebras }
\author{R. Rim\'anyi}
\address{Department of Mathematics, University of North Carolina at Chapel Hill, USA}
\email{rimanyi@email.unc.edu}
\author{L. Rozansky}
\address{Department of Mathematics, University of North Carolina at Chapel Hill, USA}
\email{rozansky@email.unc.edu}
\begin{document}

\begin{abstract} In order to extend the geometrization of Yangian $R$-matrices from Lie algebras $\gln$ to superalgebras $\gl(M|N)$, we introduce new quiver-related varieties which are associated with representations of $\glsv{M}{N}$. In order to define them similarly to the Nakajima-Cherkis varieties, we reformulate the construction of the latter by replacing the Hamiltonian reduction with the intersection of generalized Lagrangian subvarieties in the cotangent bundles of Lie algebras sitting at the vertices of the quiver. The new varieties come from replacing some Lagrangian subvarieties with their Legendre transforms. We present superalgerba versions of stable envelopes for the new quiver-like varieties that generalize the cotangent bundle of a Grassmannian. We define superalgebra generalizations of the Tarasov-Varchenko weight functions, and show that they represent the super stable envelopes. Both super stable envelopes and super weight functions transform according to Yangian $\check{R}$-matrices of $\glsv{M}{N}$ with $M+N=2$.
\end{abstract}

\maketitle

\def\gln{\mathfrak{gl}(\zN)}
\def\zN{ N }


\section{Introduction}

There is a well-known correspondence between an $A_n$-type framed Nakajima quiver variety and a weight space in the tensor product of fundamental representations of $\gln$. This correspondence is used for the geometric construction of Yangian $R$-matrices and for the categorification of the quantum group $\GLqn$.  In recent works of Okounkov and his co-authors \cite{MO, O, AO, Onew}  the key ingredient of this correspondence is the collection of so-called stable envelope maps.

The purpose of this paper is to extend this correspondence from $\gln=\glsv{\zN}{0}$ to Lie superalgebras $\glmn$.

In Section \ref{sec:newfamily} we introduce a new family of quiver-related varieties, defined by a modified Nakajima-Cherkis construction.
In the original construction the Nakajima-Cherkis quiver variety (a.k.a. bow variety) is a Hamiltonian reduction of the product of edge-related symplectic varieties $\xXsme$ by the product of vertex-related groups $\GLnv$. We show that the same quiver variety can be presented as an intersection of edge-related generalized Lagrangian subvarieties $\tcLvr$ in the product of cotangent bundles $\rmTs \glnv$. The superalgebra related varieties result from replacing some generalized varieties $\tcLvr$ by their Legendre transforms.  A replacement of some Nakajima arrow edges turns $\glN$ into $\glmn$, while a replacement of some Cherkis bow edges turns the corresponding fundamental representations of $\glmn$ into their parity-flipped twins.

Consider a tensor power of the defining vector representation of $\gl(N)$. The quiver variety corresponding to a weight space of this representation is a cotangent bundle of an $\zN$-step partial flag variety. In particular, for $\zN=2$, it is the cotangent bundle of a Grassmannian. Starting in Section \ref{secr} we carry out detailed calculations showing the four possible Legendre transform generalizations of this case. The generalized varieties, associated with the four decorated quivers of \eqref{thefourquivers} below, will still be total spaces of vector bundles over the Grassmannian, but of different bundles---see Figure \ref{fig1}. We define the superalgebra generalization of Maulik-Okounkov stable envelopes (`super stable envelopes'), and show their existence using the superalgebra generalization of Tarasov-Varchenko weight functions (`super weight functions'). We show that both super stable envelopes and super weight functions transform according to the Yangian $\check{R}$-matrices of
\[
\gl(\C_{\even} \oplus \C_{\even}), \quad
\gl(\C_{\even} \oplus \C_{\odd}), \quad
\gl(\C_{\odd} \oplus \C_{\even}), \quad
\gl(\C_{\odd} \oplus \C_{\odd}).
\]

The quiver-related varieties coming from Legendre-trasformed arrow edges appeared in the work~\cite{OR} of A.~Oblomkov and the second author on link homology, where either of two types of  the fundamental representation of $\GL$ family were assigned to each link component. The categorical representation of the braid group amounted to the categorification of the $\glmn$ Hecke algebra, where $M$ and $N$ are the numbers of braid strands colored with either type of the fundamental representation of $\GL$. Upon the reduction to $\glv{n}$ or, more generally, to $\glv{m|n}$ homology, one of these representations becomes the fundamental representation of $\glv{n}$ or $\glv{m|n}$, while the other becomes its parity-flipped twin.

\medskip

\noindent{\bf Acknowledgments.} L.R. wants to thank T.~Dimofte, R.~Bezrukavnikov, I.~Losev and especially A.~Oblomkov for many discussions and patient explanations of the basics of quiver varieties and their relation to string physics.
R.R. is supported by the Simons Foundation grant 523882.
The work of L.R. was supported in part by the NSF grant DMS-1108727.

\section{A new family of quiver varieties}\label{sec:newfamily}

\subsection{A Nakajima-Cherkis quiver variety}

In this section 
we recall the definition of varieties associated to quivers with two kinds of edges: arrow edges and bow edges. These varieties have also been called bow varieties (\cite{Ch09, Ch10, Ch11, NT, Nakajima_Satake, RS}), but in this paper we employ the methaphor that a {\em ``quiver''} can hold both {\em ``arrows''} and {\em ``bows''} so we keep calling the varieties with arrow and bow components quiver varieties. The history of quiver varieties without bow edges goes back to \cite{NakajimaOriginal, NakajimaDuke}, for a more recent survey see \cite{Ginzburg}.


\subsubsection{Hamiltonian reduction} \label{211}
In this paper we consider only linear quivers. Thus, a quiver $\xQ$ is a `linear' graph with two univalent vertices, $\NQ-1$  bi-valent vertices and $\NQ$ edges:
\[
\begin{tikzpicture}[baseline=-0.15cm]
\draw[o-o] (0,0)-- (1.5,0);
\draw[-o] (1.5,0) -- (3,0);
\draw[o-] (-3,0) -- (-2.25,0);
\draw[-] (-0.75,0) -- (0,0);
\draw[-] (3,0) -- (3.75,0);
\draw[-o] (5.25,0) -- (6,0);
\node at (-1.5,0) {$\cdots$};
\node at (4.5,0) {$\cdots$};
\node[below] at (0,0) {$\xmimo$};
\node[below] at (1.5,0) {$\xmi$};
\node[below] at (3,0) {$\xmio$};
\node[below] at (-3,0) {$\xmz$ };
\node[below] at (6,0) {$\xmN$};
\end{tikzpicture},
\]
$\xmi$ being a vertex index.
A vertex
$\xmi$ is assigned a non-negative integer $\xni$ representing the $\xni$-dimensional vector space $\xVi$ and the group $\Gi = \GL(\xVi)$. We always assume that $\xnz = \xnv{\NQ} = 0$.



An oriented edge
connecting the vertices $\xmimo$ and $\xmi$ is assigned a symplectic variety $(\xXsmi,\xom)$ with the Hamiltonian action of $\Gimo\times\Gi$ and the corresponding moment maps $\murv{i-1}$ and $\mulv{i}$.
If the orientation of the edge
is reversed, then $\xom$, $\murv{i-1}$ and $\mulv{i}$ change signs, but $(\xXsmi,\xom)$ and $(\xXsmi,-\xom)$ are symplectomorphic, because $\xXsmi$ is (a Hamiltonian reduction of) a cotangent bundle, and one can switch the sign of cotangent fibers. Hence ultimately the choice of orientation of the edges does not affect the resulting quiver variety (unless the orientation is also used to specify the stability conditions).

The  quiver variety $\xXQ$ is a result of the Hamiltonian reduction of the product of edge varieties with respect to all vertex groups:
\begin{equation}
\label{eq:bquiv}
\xXQ := \hXQ\big|_{\xmui=0\atop i=0,\ldots,\NQ}
\hmrd \GQ,
\end{equation}
where
\[
\hXQ =
\prod_{i=1}^{\NQ}
\xXsmi,\quad
\GQ =
\prod_{i=0}^{\NQ}
\GLVi,
\]
while $\xmui$ is the total moment map of the vertex $\xv$:  \[\xmui =
\mulv{i} + \murv{i}.\]

\subsubsection{Arrow and bow edges}\label{212}
The edges of a quiver are of two types: an \aedg\ and a \bedg:
\[
\begin{tikzpicture}
\draw[o-o] (0,0)--(1.5,0);
\end{tikzpicture}
:\quad
\begin{tikzpicture}
\draw[o-o,thick] (0,0)--(1.5,0);
\node[above] at (0.75,0) {arrow};
\end{tikzpicture}
\;,\qquad
\begin{tikzpicture}
\draw[o-o,thick, dashed] (0,0)--(1.5,0);
\node[above] at (0.75,0) {bow};
\end{tikzpicture}
\]
The corresponding symplectic varieties $\xXsmi$ are also of two types: the \avrs\ $\varrnm$ and the \bvrs\ $\feslnm$, where $m$ and $n$ are non-negative integers representing the dimensions of adjacent vertex spaces. An \avr\ $\varrnm$ has a Hamiltonian action of $\GLm\times\GLn$, while a \bvr\ $ \feslnm$ has a Hamiltonian action of $\GLm\times\GLn\times\Cs$. The groups $\Cs$ acting on \bvrs\ combine into the group \[\Csbow = \prod_{\text{$\edg$ \scriptsize is bow}} \Cse\] acting on the quiver variety $\xXQ$.

\subsubsection{The \avr}\label{213}
For two non-negative integers $m,n$ define
\[
\varrnm = \rmTs \Hom(\C^m,\C^n)
\]
with the natural Hamiltonian action of $\GL_m\times \GL_n$. The moment maps are $\xmunm = - YX$ and $\xmunn=XY$, where $(X,Y) \in \Hom(\C^m,\C^n)\times\Hom(\C^n,\C^m) = \rmTs\Hom(\C^m,\C^n)$.

For geometrizing the Yangian $R$-matrix we need a `stable' version of the \avr. Namely, denote
$\Homst(\C^m,\C^n)\subset \Hom(\C^m,\C^n)$ the set of linear maps of the highest rank and define
\begin{equation}
\label{eq:stvar}
\varrstnm = \rmTs \Homst(\C^m,\C^n).
\end{equation}
Now define the symplectic variety of an oriented \aedg\ as
\[
\begin{tikzpicture}[baseline=-0.15cm]
\draw[o->, thick] (0,0) -- (0.8,0);
\draw[-o,thick] (0.8, 0) -- (1.5,0);
\node[below] at (0,0) {$\xmimo$};
\node[below] at (1.5,0) {$\xmi$};
\end{tikzpicture}
,\qquad
\xXsmi = \varrv{\xni}{\xnimo}\quad\text{or}\quad \varrstv{\xni}{\xnimo}.
\]

\subsubsection{The \bvr, cf. \cite[Section 2]{Nakajima_draft}} \label{214}
\def\yno{n} \def\ynt{m}
%

Denote $U_k\subset\GLv{k}$ the subgroup of upper-triangular unipotent matrices.
For two non-negative integers $m\leq n$ define the subgroup $\Unm\subset U_n$ consisting of upper-triangular unipotent matrices of the form
\[
h=
\begin{pmatrix}[c|c]
u & \ast
\\
\hline
0 & I
\end{pmatrix},
\]
where $u\in U_{\yno-\ynt}$, while
$\ast$ is any $\ynt\times(\yno-\ynt)$ matrix and $I$ is the $\ynt\times\ynt$ identity matrix.

We define the action of $\Unm$ on $\GLn\times\C^\ynt$.
Let $\Unm$ act on $\GLn$ by right multiplication.
Denote $w(h)$ the last row of the matrix $\ast$. Then $w(h_1 h_2) = w(h_1) + w(h_2)$ and we define the action of $\Unm$ on $\C^{\ynt}$ as $h\cdot v = w(h) + v$. Now $\feslnm$ is the `twisted' symplectic quotient:
\[
\feslnm=\rmTs\bigl(\GLn\times\C^{\ynt}\bigr)\hmrd_{x_{\yno,\ynt}} \Unm :=
\rmTs\bigl(\GLn\times\C^{\ynt}\bigr)
\Big|_{\mu_{\Unm}=x_{\ynt,\yno}}/\Unm,
\]
where
\[
x_{\ynt,\yno} =
\begin{pmatrix}[c|c]
x_0 & 0
\\
\hline
0 & 0
\end{pmatrix},
\]
and $x_0$ is the $(\yno-\ynt) \times (\yno-\ynt)$ transposed nilpotent Jordan block.
Now we define the symplectic variety of a \bedg\ as
\[
\begin{tikzpicture}[baseline=-0.15cm]
\draw[o-o,thick,dashed] (0,0) -- (1.5,0);
\node[below] at (0,0) {$\xmimo$};
\node[below] at (1.5,0) {$\xmi$};
\end{tikzpicture}
,\qquad
\xXsmi =
\begin{cases}
\feslv{\xnimo}{\xni},&\text{if $\xnimo\geq \xni$,}
\\
\feslv{\xni}{\xnimo},&\text{if $\xnimo\leq \xni$.}
\\
\end{cases}
\]
%

The \bvr\ $\feslnm$ has a Hamiltonian action of the group $\GLn\times\GLm\times \Cs$ stemming from its action on $\GLn\times\Cm$. The group $\GLn$ acts on $\GLn$ by left multiplication and it does not act on $\Cm$; while $\GLm\times\Cs$ acts on $\GLn$ by right multiplication: $(h,z)\cdot g = g M^{-1}(h,z)$, where
\[
M(h,z) =
\begin{pmatrix}[c|c]
zI & 0
\\
\hline
0 & h
\end{pmatrix}.
\]
Finally, $\GLm\times\Cs$ acts on $\Cm$ by natural action and scaling: $(h,z)\cdot x = z\,hx$.

\subsubsection{Edge charges and the Hanany-Witten move}
Consider a \xlq:
\[
\begin{tikzpicture}
\draw[o-o] (0,0) -- (1.5,0);
\draw[-o] (1.5,0) -- (3,0);
\draw[o-o] (4.5,0) -- (6,0);
\draw[-o] (6,0) -- (7.5,0);
\node at (3.75,0) {$\cdots$};
\node[below] at (0,0) {$0$};
\node[below] at (1.5,0) {$n_1$};
\node[below] at (3,0) {$n_2$};
\node[below] at (7.5,0) {$0$};
\node[below] at (6,0) {$n_{N}$};
\node[below] at (4.5,0) {$n_{N-1}$};
\end{tikzpicture}
\]
with an arbitrary distribution of arrow and bow edges.

We always assume that the leftmost and the rightmost vertices are assigned number 0. For an edge $\edg$ connecting a vertex $\xmimo$ on the left and $\xmi$ on the right
define $\xcnLv{\edg}$ as the number of edges of opposite type to the left of it and $\xcnRv{\edg}$ -- to the right of it.
Define the charge $\xcne$ of $\edg$:
\[
\xcne =
\begin{cases}
\xni - \xnimo  + \xcnLv{\edg}, & \text{if $\edg$ is an arrow,}
\\
\xnimo - \xni + \xcnRv{\edg}, &\text{if $\edg$ is a bow.}
\end{cases}
\]
Since the number $\xnz$ at the leftmost vertex is fixed (zero), the charges of the edges determine the numbers at all vertices of the quiver. We consider only the quivers for which $\xcne\geq 0$ for all edges $\edg$ and all $\xni\geq 0$ for all vertices $\xmi$. The latter condition imposes a constraint on possible edge charge assignments.

The Hanany-Witten  move  \cite{HW} transposes two neighboring edges of opposite nature: the quiver variety resulting from this move is isomorphic to the original one as long as the charges of edges are preserved:
\begin{equation}
\label{eq:hwmvp}
\begin{tikzpicture}[baseline=-0.15cm]
\draw[o-,thick] (0,0) -- (1.5,0);
\draw[o-o,thick, dashed] (1.5,0) -- (3,0);
\node[below] at (0,0) {$\xnimo$};
\node[below] at (1.5,0) {$\xni$};
\node[below] at (3,0) {$\xnio$};
\end{tikzpicture}
\quad
\longleftrightarrow
\quad
\begin{tikzpicture}[baseline=-0.15cm]
\draw[o-,thick,dashed] (0,0) -- (1.5,0);
\draw[o-o,thick] (1.5,0) -- (3,0);
\node[below] at (0,0) {$\xnimo$};
\node[below] at (1.5,0) {$\xni'$};
\node[below] at (3,0) {$\xnio$};
\end{tikzpicture},
\quad\text{if
$\xni+\xni' = \xnimo+\xnio-1.$
}
\end{equation}
If the middle vertex number after the transposition ($\xni$ or $\xni'$) has to be negative, then the original quiver variety is empty.

\subsubsection{A \sprd\ quiver and its variety}
Consider a \xlq\ with $\gN$ \aedg s with charges $\bfw = (\xw_1,\ldots, \xw_{\gN})$ and $\rM$  \bedg s with charges $\bfhw=(\hw_1,\ldots,\hw_{\rM})$. Denote $\xab{\bfw} = \sum_{i=1}^{\gN} \xw_i $ and similarly for $\xab{\bfhw}$.
Since the numbers at end-point vertices are zero, the charges must satisfy the consistency condition:
$\xab{\bfw} = \xab{\bfhw}$.
If this condition is satisfied, then there exists a \emph{\sprd} quiver
$\xQbwk$ in which all  $\gN$ \aedg s are on the left and all $\rM$ \bedg s on the right:
\begin{equation}
\label{eq:spquiv}
\xQbwk:\qquad
\begin{tikzpicture}[scale=1,baseline=-0.1cm]
\draw[o-o,thick] (1.5,0) -- (3,0);
\draw[o-o,thick] (4.5,0) -- (6,0);
\draw[-o,dashed, thick] (6,0) -- (7.5,0);
\draw[o-o,dashed,thick] (9,0) -- (10.5,0);
\node at (3.75,0) {$\cdots$};
\node[below] at (1.5,0) {$0$};
\node[below] at (3,0) {$n_1$};
\node[below] at (7.5,0) {$n_{N+1}$};
\node[below] at (6,0) {$\nmax$};
\node[below] at (4.5,0) {$n_{N-1}$};
\node[below] at (9,0) {$n_{N+\rM-1}$};
\node[below] at (10.5,0) {$0$};
\node at (8.25,0) {$\cdots$};
\node[above] at (2.25,0) {$\xw_1$};
\node[above] at (5.25,0) {$\xw_{\gN}$};
\node[above] at (6.75,0) {$\hw_{\rM}$};
\node[above] at (9.75,0) {$\hw_1$};
\end{tikzpicture},
\end{equation}
and $\xnv{\gN} = \nmax$, where $\nmax:= \xab{\bfw} = \xab{\bfhw}$ is the number at the \emph{middle vertex} which separates the arrow and bow parts of the quiver.
Since the edge charges are non-negative, the vertex numbers are in relation
\[
0\leq n_1\leq \cdots\leq n_{N-1}\leq n_N,\qquad n_N \geq n_{N-1} \geq \cdots\geq n_{N+\rM-1}\geq 0.
\]

Defining the variety $\xXbwk $ associated with the \sprd\ quiver $\xQbwk$, we use varieties $\varrstnm$ of~\eqref{eq:stvar} for \aedg s.

We split the \sprd\ quiver into the arrow and bow halves:
\begin{equation}
\label{eq:splq}
\xQaw:\quad
\begin{tikzpicture}[scale=1,baseline=-0.1cm]
\draw[o-o,thick] (1.5,0) -- (3,0);
\draw[o-,thick] (4.5,0) -- (5.85,0);
\node at (3.75,0) {$\cdots$};
\node[below] at (1.5,0) {$0$};
\node[below] at (3,0) {$n_1$};
\node[below] at (6,0) {$\nmax$};
\node at (6,0) {$\Box$};
\end{tikzpicture}
,\qquad
\xQbw:\quad
\begin{tikzpicture}[scale=1,baseline=-0.1cm]
\node at (0,0) {$\Box$};
\draw[-o,dashed,thick] (0.15,0) -- (1.5,0);
\node at (2.25,0) {$\cdots$};
\draw[o-o,dashed,thick] (3,0) -- (4.5,0);
\node[below] at (0,0) {$\nmax$};
\node[below] at (3,0) {$\xnv{\gN+K-1}$};
\node[below] at (4.5,0) {$0$};
\end{tikzpicture}
\end{equation}
The boxes at end-vertices indicate that we do not perform the Hamiltonian reduction there.

The  arrow quiver variety
is the cotangent bundle to a partial flag variety: $\rmTs\flFw$, where
$\flF_{\bfw} = \{\Fb \}$ and
\[
\Fb = (F_0\subset F_1\subset \cdots \subset F_{\gM-1}\subset F_{\gM}=\C^{|\bfw|}
),\qquad \dim F_0 = 0,\quad \dim F_{i+1} - \dim F_{i} = \xw_i.
\]

We denote the bow variety by $\xSSbhw$.  If the \bedg\ charges are non-decreasing:
\[
\hw_1 \leq \cdots\leq \hw_{\rM},
\]
then the bow variety $\xSSbhw$ is the equivariant Slodowy slice introduced by I.~Losev~\cite{L}. Denote $\xSSobhw$ the Slodowy slice corresponding to the nilpotent matrix with Jordan block decomposition given by $\bfhw$. Then $\xSSbhw = \GLv{\nmax}\times \xSSobhw$ and the moment map for the action of $\GLv{\nmax}$ is
$\xmuS(g,x) = \Adv{g} x$.

In order to describe the action of $\Csbow = (\Cs)^{\rM}$ on $\xSSbhw$, we split $\C^{\nmax} =
\C^{\hw_1}\oplus\cdots\oplus\C^{\hw_{\rM}}$. For $\bfz=(z_1,\ldots,z_{\rM})\in\Csbow$ denote $M_{\bfz}$ the diagonal matrix which multiplies each component $\C^{\hw_i}$ by $z_i$. Then $\bfz\cdot(g,x) = (g \Mbz^{-1},\Adv{\Mbz} x)$.


Since the \sprd\ quiver $\xQbwk$ results from joining $\xQaw$ and $\xQbw$ at the middle vertex, the corresponding variety is the Hamiltonian reduction with respect to the middle group:
\begin{equation}
\label{eq:glqv}
\xXbwk = (\rmTs\xFLbw\times \xSSbhw)\hmrd \GLv{\nmax}.
\end{equation}

\subsubsection{The $\GLN$ weight space quiver}
A weight of a $\GLN$-module is determined by $\gN$ ordered non-negative integers $\bfw = (\xw_1,\ldots, \xw_{\gN})$. Denote $\xRk$ the $\xk$-th fundamental representation of $\GLN$, that is $\xRo$ is the defining $\gN$-dimensional module and $\xRk = \xWd^{\xk} \xRo$. For an ordered sequence of non-negative integers
$\bfhw=(\hw_1,\ldots,\hw_{\rM})$ denote
\[
\xRbk = \xRv{\hw_1}\otimes\cdots\otimes\xRv{\hw_\rM}
\]
and denote $\wVbwk\subset \xRbk$ its weight space of weight $\bfw$. The corresponding quiver is a \xlq\ consisting of $\rM$  \bedg s with charges $\bfhw$ and $\gN$ \aedg s with charges $\bfw$. The edges can be distributed randomly along the quiver. In this paper we use the quiver $\xQbwk$ of~\eqref{eq:spquiv} and its variety $\xXbwk$ of~\eqref{eq:glqv}.

In particular, if we consider the tensor product of only defining representations $\xRo\otimes\cdots\otimes\xRo$, then
$\bfhw = \bfo = (1,\ldots,1)$, and $\xSSbo = \rmTs \GLnmax$, so the corresponding variety is the cotangent bundle to a partial flag variety:
$\xXbko = \rmTs\xFLbw$

\subsection{Alternative construction}
For our generalization in Section \ref{sec:newvars} we need an alternative construction of arrow-bow quiver varieties, which we describe now.

\subsubsection{Critical locus}

Let $\xXsm$ be a symplectic variety with the Hamiltonian action of a Lie group  $\xxG$ and the corresponding moment map $\xmu$. The adjoint action of $\xxG$ on its Lie algebra $\xxg$ extends to the action of $\xxG$ on $\xXsm\times \xxg$.
Consider a $\xxG$-invariant function $\xWsm \in \C[\xXsm\times\xxg]^{\xxG}$ defined as a pairing of $\xmu$ and the elements of $\xxg$: $\xWsm(x,X) = \Tr \xmu(x) X.$ If the action of $\xxG$ on $\xXsm$ is free, then the projection $\xXsm\times\xxg\longrightarrow \xXsm$ establishes an isomorphism between the critical locus 
$\Critvv{\xWsm}{\xXsm\times\xxg}$ of $\xWsm$ on $\xXsm\times\xxg$
and the subvariety $\xXsm|_{\xmu=0}$. As a result, the Hamiltonian reduction of $\xXsm$ can be presented as a (GIT) quotient of the critical locus of $\xWsm$:
\begin{equation}
\label{eq:smcrit}
\xXsm \hmrd \xxG \cong
\Critvv{\xWsm}{\xXsm\times\xxg}/\xxG.
\end{equation}

\subsubsection{Symplectic intersection}
For a given Lie group $\xxG$ we consider `\Gpr s' $(\xX,\xW)$, where $\xX$ is a variety with the $\xxG$ action and $\xW$ is a $\xxG$-invariant function on $\xX\times\xxg$: $\xW\in\C[\xX\times\xxg]^{\xxG}$. In all our examples $\xW$ is linear as a function on $\xxg$, that is, there is a function $\xmu\colon \xX\rightarrow \xxg$ (not necessarily a moment map) and $W = \Tr \xmu X$. For two \Gpr s $(\xX_i,\xW_i)$, $i=1,2$ we define their symplectic intersection as the critical locus:
\begin{equation}
\label{eq:defsint}
(\xX_1,\xW_1) \smcap (\xX_2,\xW_2) := \Critvv{\xW_2 - \xW_1}{\xX_1 \times\xX_2\times\xxg}.
\end{equation}

This intersection has a symplectic geometry interpretation. A pair $(\xX,\xW)$ determines a `generalized' $\xxG$-invariant Lagrangian subvariety of $\rmTs\xxg$ or, equivalently, a generalized Lagrangian subvariety of the Hamiltonian reduction $\rmTs\xxg\hmrd \xxG$. Present $\rmTs\xxg = \xxg\times \xxg$ with coordinates $(X,Y)$.
 By definition,
\[
\tcLXW := \left\{ (x,X,Y) \in \xX\times\rmTs\xxg \; \Big| \; \frac{\partial \xW}{\partial X } = Y,\; \frac{\partial \xW}{\partial x} = 0\right\}.
\]
The image $\cLXW\subset \rmTs\xxg$ of $\tcLXW$ under the projection $\xX\times\rmTs\xxg\longrightarrow\rmTs\xxg$ is a (possibly singular) Lagrangian subvariety of $\rmTs\xxg$. Thus the generalized Lagrangian subvariety $\tcLXW$ represents a fibration $\tcLXW\rightarrow\cLXW$ with a Lagrangian base. We consider two \Gpr s equivalent: $\xGprv{\xX_1}{\xW_1}\sim \xGprv{\xX_2}{\xW_2}$, if they produce the same fibration.

Define the intersection of two generalized Lagrangian subvarieties as the product of fibers over the intersection of their bases:
\begin{multline*}
\tcLXWv{1} \lgcap \tcLXWv{2} :=
\\
\{
(x_1,x_2,X,Y)\in\xX_1\times\xX_2\times\rmTs\xxg\;|\;
(x_1,X,Y)\in \tcLXWv{1},\;(x_2,X,Y)\in\tcLXWv{2}
\}.
\end{multline*}

Now a projection $
\xX_1\times\xX_2\times \rmTs\xxg \longrightarrow
\xX_1\times\xX_2\times\xxg$ identifies the symplectic intersection of pairs with the intersection of their generalized Lagrangian subvarieties:
\[
\tcLXWv{1}\lgcap\tcLXWv{2} \xrightarrow{\;\;\cong\;\;}
(\xX_1,\xW_1) \smcap (\xX_2,\xW_2).
\]
\subsubsection{Brief 2-category motivation}
\Gpr s represent objects in the 2-category~\cite{KRS} associated with the hamiltonian quotient  $\rmTs \xxg \hmrd \xxG$ considered as a symplectic variety. The category of morphisms between two \Gpr s $(\xX_1,\xW_1)$ is the category of $\xxG$-equivariant  matrix factorizations of $\xW_2 - \xW_1$ over $\xX_1 \times\xX_2\times\xxg$. This category is `approximately' equivalent to the derived category of $\xxG$-equivariant coherent sheaves over the critical locus~\eqref{eq:defsint}, which motivates the set-theoretical definition of the symplectic intersection.

The particular 2-category of $\rmTs \xxg \hmrd \xxG$  and its arrow edge-related objects were studied in detail in~\cite{OR1} in relation to the categorical representation of the braid group and the construction of the link homology.


\subsubsection{Quiver varieties as symplectic intersections}
The relation~\eqref{eq:smcrit} allows us to transform the standard definition~\eqref{eq:bquiv} of the quiver variety $\xXQ$ into the symplectic intersection~\eqref{eq:defsint}. For an edge $\xe$ connecting the vertices $\xvo$ and $\xvt$, its edge variety $\xXsme$ becomes a pair $(\xXsme;\xWsme)$, where $\xWsme = \Tr \muvoe \Xvoe +  \Tr\muvte\Xvte$, relative to the Lie algebra $\glv{\nvo}\times\glv{\nvt}$. Now the quiver variety $\xXQ$ can be presented as a quotient of the symplectic intersection of all pairs $(\xXsme;\xWsme)$ in the total Lie algebra
$\gQ= \prod_{\vrt\in Q_{\rmv}} \glnv$
\[
\xXQ = \overset{\ism}{\bigcap_{\xe\in Q_{\rme}}}\xGprv{\xXsme}{\xWsme} \Big/ \GQ
:= \Critvv
{ \xWsmet}
{\hXQ\times\gQ}/\GQ,
\]
where
\[
\xWsmet =  \sum_{\xe\in Q_{\rme}}\xWsme = \sum_{\xv\in Q_{\rmv}} \Tr \xmuv X_{\xv}.
\]

\subsection{New quiver-related varieties}\label{sec:newvars}
\subsubsection{A Legendre transform}
For a \Gpr\ $\xGprv{\xX}{\xW}$ define a Legendre-transformed pair as
\[
\Lgv{\xGprv{\xX}{\xW}} := \xGprbv{\xX\times\xxg}{-\xW(x,Z) + \Tr XZ},
\]
where
$(x,Z) \in \xX\times\xxg$ and $\xxG$ has adjoint action on $\xxg$. The generalized Lagrangian subvarieties of $\xGprv{\xX}{\xW}$ and $\Lgv{\xGprv{\xX}{\xW}}$ are related by the Legendre anti-symplectomorphism
\[\fLg\colon \rmTs\xxg\longrightarrow\rmTs\xxg,\qquad (X,Y)\mapsto (Y,X).\]
As a consequence, the symplectic intersection of two \Gpr s is isomorphic to the symplectic intersection of their Legendre transforms:
\[
\Lgv{\xGprv{\xX_1}{\xW_1}} \smcap \Lgv{\xGprv{\xX_2}{\xW_2}}
\cong(\xX_1,\xW_1) \smcap (\xX_2,\xW_2).
\]
Finally, $\fLg^2 = 1$, that is, the double Legendre transform of a \Gpr\ is equivalent to the original pair:
\[
\Lgv{\bigl(\Lgv{\xGprv{\xX}{\xW}}\bigr)} \sim \xGprv{\xX}{\xW}.
\]
\subsubsection{Legendre transform and quiver varieties}
The Legendre transform can be applied to a \Gpr\ $\xGprv{\xXsme}{\xWe}$ associated with an edge $\edg$ of a quiver. If the edge $\edg$ is attached to a vertex $\xv$, then we define the one-sided transform
\[
\Lgvv{\xGprv{\xXsme}{\xWe}}{\xv} :=
\xGprv{\xXsme\times\xxg_{\xv}}{\Tr Z_{\xv}(X_{\xv} - \xmuv)}.
\]
The two-sided transform $\Lgv{\xGprv{\xXsme}{\xWe}}$ is defined as the application of one-sided transforms on both sides of the edge $\edg$.

A \xmrk\ quiver $Q$ has marks ($\ast$) at the ends of some of its edges. A mark means that the \Gpr\ of the edge is Legendre-transformed at that side. Thus, depending on the marks, a \Gpr\ $\xGprv{\xXe}{\xWe}$ of an edge $\edg$ attached to the vertices $\xv_1,\xv_2$ may be of one of the four forms:
\begin{align*}
\xGprv{\xXsme}{\xWsme}:&
\begin{tikzpicture}[baseline=-0.15cm]
\draw[o-o] (0,0) -- (1.5,0);
\node[below] at (0,0) {$\xv_1$};
\node[below] at (1.5,0) {$\xv_2$};
\end{tikzpicture},
&
\Lgvv{\xGprv{\xXsme}{\xWsme}}{\xv_1}:&
\begin{tikzpicture}[baseline=-0.15cm]
\draw[o-o] (0,0) -- (1.5,0);
\node[below] at (0,0) {$\xv_1$};
\node[below] at (1.5,0) {$\xv_2$};
\node[above] at (0.25,0) {$\ast$};
\end{tikzpicture},
\\
\Lgvv{\xGprv{\xXsme}{\xWsme}}{\xv_2}: &
\begin{tikzpicture}[baseline=-0.15cm]
\draw[o-o] (0,0) -- (1.5,0);
\node[below] at (0,0) {$\xv_1$};
\node[below] at (1.5,0) {$\xv_2$};
\node[above] at (1.25,0) {$\ast$};
\end{tikzpicture},
&
\Lgv{\xGprv{\xXsme}{\xWsme}}: &
\begin{tikzpicture}[baseline=-0.15cm]
\draw[o-o] (0,0) -- (1.5,0);
\node[below] at (0,0) {$\xv_1$};
\node[below] at (1.5,0) {$\xv_2$};
\node[above] at (0.25,0) {$\ast$};
\node[above] at (1.25,0) {$\ast$};
\end{tikzpicture} =
\begin{tikzpicture}[baseline=-0.15cm]
\draw[o-o] (0,0) -- (1.5,0);
\node[below] at (0,0) {$\xv_1$};
\node[below] at (1.5,0) {$\xv_2$};
\node[above] at (0.75,0) {$\ast$};
\end{tikzpicture},
\end{align*}
that is, a single mark in the middle means a complete (two-sided) Legendre transform.

Note that a mark can be moved from one edge to the other at the same vertex and if two edges are marked at the same vertex, then these marks can be removed:
\begin{equation}
\label{eq:dblL}
\begin{tikzpicture}[baseline=-0.1cm]
\draw[o-] (0,0) -- (0.75,0);
\draw[-] (-0.75,0) -- (0,0);
\node[above] at (-0.25,0) {$\ast$};
\end{tikzpicture}
=
\begin{tikzpicture}[baseline=-0.1cm]
\draw[o-] (0,0) -- (0.75,0);
\draw[-] (-0.75,0) -- (0,0);
\node[above] at (-0.25,0) {$\ast$};
\end{tikzpicture},\qquad
\begin{tikzpicture}[baseline=-0.1cm]
\draw[o-] (0,0) -- (0.75,0);
\draw[-] (-0.75,0) -- (0,0);
\node[above] at (-0.25,0) {$\ast$};
\node[above] at (0.25,0) {$\ast$};
\end{tikzpicture}
=
\begin{tikzpicture}[baseline=-0.1cm]
\draw[o-] (0,0) -- (0.75,0);
\draw[-] (-0.75,0) -- (0,0);
\end{tikzpicture}.
\end{equation}

\subsubsection{Mixed vector bundles over partial flag varieties}
As an example of the latter construction, consider the following quiver $Q$:
\begin{equation}
\label{eq:qst}
\begin{tikzpicture}
\draw[o-o, thick] (0,0) -- (1.5,0);
\draw[-o,thick] (1.5,0) -- (3,0);
\draw[-o,thick] (3,0) -- (4.5,0);
\node at (5.25,0) {$\cdots$};
\draw[o-o,thick] (6,0) -- (7.5,0);
\draw[-,thick] (7.5,0) -- (8.85,0);
\node at (9,0) {$\Box$};
\node[above] at (2.25,0) {$\ast$};
\node[above] at (3.75,0) {$\ast$};
\node[above] at (8.25,0) {$\ast$};
\node[below] at (0.75,0) {$\xw_1$};
\node[below] at (2.25,0) {$\xw_2$};
\node[below] at (3.75,0) {$\xw_1$};
\node[below] at (6.75,0) {$\xw_{\gM-1}$};
\node[below] at (8.25,0) {$\xw_{\gM}$};
\end{tikzpicture}
\end{equation}
All of its edges are of the arrow type, their charges being the non-negative integers $\bfw = (\xw_1,\ldots, \xw_{\gM})$. The marks are distributed randomly among the edges. The box $\Box$ at the end of the quiver indicates that we do not apply \sint\ with respect to its Lie algebra.

The resulting variety has the following description. Consider a partial flag variety
$\flF_{\bfw} = \{\Fb \}$, where
\begin{equation}\label{eq:filtration}
\Fb = (F_0\subset F_1\subset \cdots \subset F_{\gM-1}\subset F_{\gM}=\C^{|\bfw|}
),\qquad \dim F_0 = 0,\quad \dim F_{i+1} - \dim F_{i} = \xw_i.
\end{equation}
For a partial flag $\Fb$ consider a subspace $\xsVFb\subset \End(\C^{|\bfw|})$ such that $\xphi\in \xsVFb$ iff
\begin{equation}\label{eq:part}
\xphi(F_{i}) \subset
\begin{cases}
F_{i},&\text{if the $i$-th edge is marked},
\\
F_{i-1},&\text{if the $i$-th edge is unmarked},
\end{cases}
\end{equation}
see
\[
\begin{tikzpicture}[scale=.5,baseline=0]
\draw[fill=blue!40] (6,1) to (6,6) to (1,6) to (1,5) to (2,5) to (2,4) to (3,4) to (3,3) to (4,3) to (4,2) to (5,2) to (5,1);
\draw[fill=blue!25] (1,5) to (2,5) to (2,4) to (1,4);
\draw[fill=blue!25] (3,3) to (4,3) to (4,2) to (3,2);
\draw[fill=blue!25] (4,2) to (5,2) to (5,1) to (4,1);
\draw (0,0) to (6,0) to (6,1) to (0,1) to (0,2) to (6,2) to (6,3) to (0,3) to (0,4) to (6,4) to (6,5) to (0,5) to (0,6) to (6,6) to (6,0) to (5,0) to (5,6) to (4,6) to (4,0) to (3,0) to (3,6) to (2,6) to (2,0) to (1,0) to (1,6) to (0,6) to (0,0);
\node at (1.5,6.3) {$*$}; \node at (3.5,6.3) {$*$}; \node at (4.5,6.3) {$*$}; \end{tikzpicture}.
\]
The quiver~\eqref{eq:qst} produces the \Gpr\ $\xGprv{\xXQ}{\xW_Q}$, where $\xX_Q$ is the bundle over $\flF_{\bfw}$ with fibers $\xsVFb$, while $\xW_Q = \Tr \xphi X$.

\begin{remark}
The image of the map $\xmu\colon \xX_Q\longrightarrow \glv{|\bfw|}$, $\xmu(x) = \xphi$ has an explicit description. Denote by $\gMnil$ the number of unmarked edges and let $\bfwnil = (\xwnil_1,\ldots,\xwnil _{\gMnil})$ be the list of the corresponding numbers $\xw_i$ in descending order: $\xwnil_i \geq \xwnil_j$, if $i<j$. Then the image of $\xmu$ consists of matrices $\xphi\in \End(\C^{|\bfw|})$ such that
\[
\dim \ker \xphi^k \geq \sum_{i=1}^{k} \xwnil_i\qquad\text{for all $k = 1,2,\ldots,\gMnil$}.
\]
\end{remark}

\subsection{String theory motivation}\label{sec:string}
\subsubsection{Old quiver varieties}
It is well-known that a quiver variety is the Higgs branch of a 3-dimensional super-Yang-Mills (SYM) theory of the type considered by Hanany and Witten~\cite{HW}. The theory describes the IIB superstring physics of a stack of D3 branes sandwiched between NS5 and D5 branes. The whole brane arrangement is within the 10-dimensional space with coordinates $x_0,\ldots,x_9$, each brane representing an affine subspace parallel to a coordinate subspace.

The 10-dimensional space $\IR^{10}$ is split into a product of subspaces:
\[
\IRte = \IRcmn \times \IRq\times\IRthNS\times\IRthD
\]
All branes are stretched along the common 3-dimensional space $\IRcmn$ and the table~\ref{tb:2b} describes the extra directions of affine subspaces spanned by various branes.
\begin{table}[!h]
\begin{tabular}{|c||c|c|c|}
\hline
type  & D3 & D5 & NS5
\\
\hline
direction &
$\IRq$ & $\IRthD$ & $\IRthNS$
\\
\hline
\end{tabular}
\vspace{0.5cm}
\caption{}
\label{tb:2b}
\end{table}
D3 branes begin and end on D5 and NS5 branes, and their arrangement along $\IRq$ is dual to the quiver $Q$: the transverse NS5 (resp. D5) branes correspond to arrow (resp. bow) edges, while the segments of D3 branes correspond to the vertices of $Q$,  $\xnv{i}$ being the number of D3 branes between the adjacent D5 and NS5 branes, for example:
\[
\begin{tikzpicture}
\begin{scope}[xshift=0.5cm]
\draw[thick,->] (-4,0) -- (-3,0);
\node[right] at (-3,0) {$\IRq$};
\draw[thick,->] (-4,0) -- (-4,1);
\node[above] at (-4,1) {$\IRthD$};
\draw[thick,->] (-4,0) -- (-4.625,-0.5);
\node[below] at (-4.625,-0.5) {$\IRthNS$};
\end{scope}
\draw[thick] (0,0) -- (6,0);
\draw[thick] (0.75,-1) -- (3.25,1);
\node [below] at (0.75,-1) {NS5};
\draw[thick] (4,-1.5) -- (4,1.5);
\node[above] at (4,1.5) {D5};
\node[above] at (1,0) {$n_1$D3};
\node[above] at (3.2,0) {$n_2$D3};
\node[above] at (5,0) {$n_3$D3};
\node at (2,0) {$\bullet$};
\node at (4,0) {$\bullet$};
\draw[thick,o-o] (1,-2.5) -- (3,-2.5);
\draw[thick, dashed,-o] (3,-2.5) -- (5,-2.5);
\node[below] at (1,-2.5) {$n_1$};
\node[below] at (3,-2.5) {$n_2$};
\node[below] at (5,-2.5) {$n_3$};
\begin{scope}[xshift=0cm]
\node at (7.5,0){=};
\draw[thick] (8,0)--(11,0) ;
\draw[thick,red] (8.6,-.5)--(9.1,.5);
\draw[thick,blue] (9.9,.5)--(10.4,-.5);
\node at (8.5,.2) {\small $n_1$};
\node at (9.5,.2) {\small $n_2$};
\node at (10.5,.2) {\small $n_3$};
\node at (9.4,-1) {\small in the notation of \cite{RS}};
\draw [thin](7.2,-1.3) to [out=120,in=-90] (7,-.4) to [out=90,in=-120] (7.2,0.5);
\draw [thin](11.5,-1.3) to [out=60,in=-90] (11.7,-.4) to [out=90,in=-60] (11.5,0.5);
\end{scope}
\end{tikzpicture}.
\]

\subsubsection{New quiver varieties}
New quiver-related varieties emerge as Higgs branches of 2d SYM theories describing the physics of a stack of D2 branes sandwiched between NS5 and D4 branes within the IIA string theory. This time the 10-dimensional space-time $\IRte$ is split in the following way:
\[
\IRte = \IRtwcmn \times \IRq\times\IRtwNS\times\IRoD\times\IRtwx\times\IRtwy.
\]
All branes span $\IRtwcmn$. D2 branes are segments along $\IRq$. The branes NS5 span $\IRtwNS$, while the branes D4 span $\IRoD$. Each NS5 (resp. D5) brane may stretch either along $\IRtwx$ or along $\IRtwy$ and depending on this choice, we denote them as \NSx, \NSy\ (resp. \Dx, \Dy). These choices are summed up in the table~\ref{tb:2a}.
\begin{table}[!h]
\begin{tabular}{|c||c|c|c|c|c|}
\hline
type  & D2 & \Dx & \Dy & \NSx &\NSy
\\
\hline
direction &
$\IRq$ & $\IRoD\times\IRtwx$ &$\IRoD\times\IRtwy$& $\IRtwNS\times\IRtwx$ &$\IRtwNS\times\IRtwy$
\\
\hline
\end{tabular}
\vspace{0.5cm}
\caption{}
\label{tb:2a}
\end{table}

The correspondence between the brane arrangements and marked quivers is the same as in the Hanany-Witten IIB construction, except that now the branes \NSx\ and \Dy\ correspond to unmarked edges, while \NSy\ and \Dx\ correspond to marked edges.

If the space $\IRtwx\times\IRtwy$ is endowed with the Taub-NUT metric, then $\IRtwx\times \{0\}$ and $\{0\}\times\IRtwy$ become a pair of cigars and our construction makes contact with that of V.~Mikhaylov and E.~Witten~\cite{MW} who studied the emergence of $\mathrm{U}(M|N)$ Chern-Simons theory when D-branes are wrapped on both cigars. Note however, that we have a skew Howe-dual version here, because in our case the super-algebra is determined by the number of \NSx\ and \NSy\  branes, whereas D4 branes are responsible for its representations.

\subsection{Quiver varieties for $\glmn$ superalgebras} \label{sec24}
\subsubsection{Weights and fundamental representations}
A weight of the superalgebra $\glmn$ is described by two sequences of ordered integers $(\bfw,\bfwp)$, where
$\bfw = (\xw_1,\ldots, \xw_{\gM})$ and
$\bfwp = (\xwp_1,\ldots, \xwp_{\gN})$.

Denote $\xRo$ the defining fundamental representation of $\glmn$: $\xRo\cong \C^{\gM|\gN}= \C^{\gM}_{\mathrm{even}} \oplus \C^{\gN}_{\mathrm{odd}}$ and denote $\xRk = \Lambda^\xk \xRo$. Also denote by $\xRflo$ the parity-flipped fundamental representation: $\xRflo  \cong\C^{\gN|\gM}= \C^{\gM}_{\mathrm{odd}}\oplus \C^{\gN}_{\mathrm{even}}$ and $\xRflk = \Lambda^{\xk} \xRflo$.

For two ordered sequences of non-negative integers
$\bfhw=(\hw_1,\ldots,\hw_{\rM})$ and $\bfhwp=(\hwp_1,\ldots,\hwp_{\rMp})$ denote
\[
\xRbkp = (\xRv{\hw_1}\otimes\cdots\otimes\xRv{\hw_\rM})\otimes
(\xRflv{\hwp_1}\otimes\cdots\otimes\xRflv{\hwp_{\rMp}} )
\]
and denote $\wVbwkp\subset \xRbk$ its weight space of weight $(\bfw;\bfwp)$.

To the weight space $\wVbwkp$ we associate the marked quiver $\xQbwkp$ which is similar to $\xQbwk$ of \eqref{eq:spquiv}. Going from left to right, it has
\begin{enumerate}
\item  $\gN$ marked (that is, Legendre-transformed) \aedg s with charges $\bfhwp$,
\item $\gM$ unmarked (that is, ordinary) \aedg s with charges $\bfhw$,
\item $\rM$ marked \bedg s with charges  $\bfhw$ from right to left,
\item $\rMp$ unmarked \bedg s with charges  $\bfhwp$ from right to left.
\end{enumerate}
Relations~\eqref{eq:dblL} allow us to present this quiver by using only two endpoint marks:
\[
\begin{tikzpicture}
\draw[o-o,thick] (0,0) -- (1.5,0);
\node[below] at (0.75,0) {$\xwp_1$};
\draw[-,thick] (1.5,0) -- (2.25,0);
\node at (3,0) {$\cdots$};
\draw[-,thick] (3.75,0) -- (4.5,0);
\node[below] at (5.25,0) {$\xw_1$};
\draw[o-,thick] (4.5,0) -- (5.25,0);
\node[above] at (4.25,0) {$\ast$};
\node at (6,0) {$\cdots$};
\draw[-o,thick] (6.75,0) -- (7.5,0);
\node[below] at (7.5,0) {$\nmax$};
\draw[-,thick,dashed] (7.5,0) -- (8.25,0);
\node at (9,0) {$\cdots$};
\draw[-o,thick,dashed] (9.75,0) -- (10.5,0);
\node[below] at (9.75,0) {$\hw_1$};
\draw[-,thick,dashed] (10.5,0) -- (11.25,0);
\node[above] at (10.75,0) {$\ast$};
\node at (12,0) {$\cdots$};
\draw[-,thick,dashed] (12.75,0) -- (13.5,0);
\draw[o-o,thick,dashed] (13.5,0) -- (15,0);
\node[below] at (14.25,0) {$\hwp_1$};
\end{tikzpicture}
\]
\begin{remark}
We believe that the weight space $\wVbwkp\subset \xRbk$ can be represented by any \sprd\ quivers, that is, the marked and unmarked edges are distributed arbitrarily as long as the \aedg s are to the left of the \bedg s. One can also transpose two unmarked edges or two marked edges by the Hanany-Witten move~\eqref{eq:hwmvp}, however we do not know whether it is possible to transpose a marked edge and an unmarked edge of opposite nature.
\end{remark}

\subsubsection{Varieties for weight spaces}

For a Lie superalgebra $\glmn$ we consider the weight space of $(\bfw,\bfwp)$  in the module $\xRbkz = \xRv{\hw_1}\otimes\cdots\otimes\xRv{\hw_{\rM}}$. The corresponding marked \sprd\ quiver $\xQbwpk$ has the form
\[
\xQbwpk\colon\quad
\begin{tikzpicture}[scale=1,baseline=-0.1cm]
\draw[o-o,thick] (0,0) -- (1.5,0);
\node[below] at (0.75,0) {$\xwp_1$};
\draw[-,thick] (1.5,0) -- (2.25,0);
\node at (3,0) {$\cdots$};
\draw[-,thick] (3.75,0) -- (4.5,0);
\node[below] at (5.25,0) {$\xw_1$};
\draw[o-,thick] (4.5,0) -- (5.25,0);
\node[above] at (4.25,0) {$\ast$};
\node at (6,0) {$\cdots$};
\draw[-o,thick] (6.75,0) -- (7.5,0);
\draw[-,thick,dashed] (7.5,0) -- (8.25,0);
\node at (9,0) {$\cdots$};
\draw[-o,thick,dashed] (9.75,0) -- (10.5,0);
\node[below] at (9.75,0) {$\hw_1$};
\node[below] at (7.5,0) {$\nmax$};
\end{tikzpicture}
\]
and we denote $\xXbwpk$ the corresponding variety.
Similar to~\eqref{eq:splq}, we split this quiver into the arrow half
\[
\xQawp\colon\quad
\begin{tikzpicture}[scale=1,baseline=-0.1cm]
\draw[o-o,thick] (0,0) -- (1.5,0);
\node[below] at (0.75,0) {$\xwp_1$};
\draw[-,thick] (1.5,0) -- (2.25,0);
\node at (3,0) {$\cdots$};
\draw[-,thick] (3.75,0) -- (4.5,0);
\node[below] at (5.25,0) {$\xw_1$};
\draw[o-,thick] (4.5,0) -- (5.25,0);
\node[above] at (4.25,0) {$\ast$};
\node at (6,0) {$\cdots$};
\draw[-,thick] (6.75,0) -- (7.35,0);
\node at (7.5,0) {$\Box$};
\node[below] at (7.5,0) {$\nmax$};
\end{tikzpicture}
\]
and the bow half $\xQbw$. Each half-quiver produces its own \Gpr, and the variety $\xXbwpk$ of the full quiver is their \sint\ with respect to $\rmTs\glv{\nmax}$.

The bow half-quiver $\xQbw$ yields the \Gpr\ $\xGprv{ \xSSbhw}{\Tr\xmu_{\xSS}X}$, where $\xmuS$
is the moment map of the action of $\GLv{\nmax}$ on $\xSSbhw$.

%
%
%
%

The arrow half-quiver $\xQawp$ yields the \Gpr\ $\xGprv{\hflFm}{\Tr\xmuF X}$. Here $\hflFm$ is
a `mixed parabolic-nilpotent' vector bundle 
over the flag variety  $\flFwm = \{\Fb \}$ which corresponds to the concatenated weight list $(\bfwp,\bfw)$.
The fiber of $\hflF_{\bfwp,\bfw}$ over  a partial flag $\Fb$ is the subspace $\xsVFb\subset \End(\C^{\nmax})$ such that $\xphi\in \xsVFb$ if
\[
\xphi(F_{i}) \subset
\begin{cases}
F_{i},&\text{if the $i\leq \rN$,}
\\
F_{i-1},&\text{if the $i>\rN$.}
\end{cases}
\]
The function $\xmuF\colon \hflFm\rightarrow \glv{\nmax}$ is defined as $\xmuF(\Fb,\xphi) = \xphi$.

Thus the variety $\xXbwpk$ is the \sint:
\begin{equation}
\label{eq:frstex}
\begin{split}
\xXbwpk   & = \xGprv{\hflFm}{\Tr\xmuF X}\smcap\xGprv{ \xSSbhw}{\Tr\xmuS X}
\\
&= \Critvv{\Tr(\xmuF-\xmuS)X}{\hflFm\times\xSSbhw\times\glnmax}/\GLnmax.
\end{split}
\end{equation}
The criticality with respect to $\glnmax$ requires $\xmuF= \xmuS$. Since $\xmuS$ is the moment map for the action of $\GLnmax$ on $\xSSbhw$ and this action is free, it follows that the criticality of $\Tr\xmuS X$ along $\xSSbhw$ requires $X=0$. The variation of $\Tr\xmuF X$ along $\hflFm$ is proportional to $X$, so $X=0$ guarantees that this variation is zero. Hence the critical locus of~\eqref{eq:frstex} is just the condition $\xmuF=\xmuS$ imposed on $\hflFm\times\xSSbhw$,
so $\xXbwpk$ has a quiver-like presentation:
\[
\xXbwpk = (\hflFm\times\xSSbhw)\Big|_{\xmuF=\xmuS} \big/\GLnmax.
\]

If we consider the tensor product of defining representations $\xRbo=\xRo\otimes\cdots\otimes\xRo$, that is,
$\bfhw = \bfo = (1,\ldots,1)$, then $\xSSbo = \rmTs \GLnmax$ and the corresponding variety is the mixed bundle to the partial flag variety:
\begin{equation}
\label{eq:glmndef}
\xXbwpko = \rmTs\xFLbw.
\end{equation}

\subsubsection{$\glN$ presented as $\glzn$}
Finally, consider the case of $\gM=0$, that is, the algebra is $\glN$, but it is presented as $\glzn$ rather than as traditional $\glnz$. Denote $\xRN$ the (ordinary, even) defining representation of $\glN$ and consider the product of its symmetric powers
\[
\xRsbk =  \smSv{\hw_1}\xRN\otimes\cdots\otimes\smSv{\hw_{\rM}}\xRN.
\]
The defining representation $\xRo$ of $\glzn$ is odd, so its exterior powers appearing in $\xRbkz$ are, in fact, symmetric powers of $\xRN$: $\xRbkz = \xRsbk$. Hence, according to the general construction, the weight $\bfw$ subspace in the product of symmetric powers $\xRsbk$ is represented by the \sint\ of bundle of parabolic algebras over the flag variety $\xFLbw$ and the equivariant Slodowy slice
\[
\xYbwk =
(\Prbw\times\xSSbhw)\Big|_{\xmuF=\xmuS} \big/\GLnmax,
\]
where $\Prbw$ is a bundle over $\xFLbw$, whose fiber
 over  a partial flag $\Fb$ is the subspace $\xsVFb\subset \End(\C^{\nmax})$ such that $\xphi\in \xsVFb$ if
 $\xphi(F_{i})\subset F_i$ for all $i$, while $\xmuF(\Fb,\xphi) = \xphi$.

\section{The spaces $X^{(r)}_{k,n}$ and their equivariant cohomology}\label{secr}

From now on in the whole paper we will focus on the construction of Section \ref{sec24} in the special case of $M=N=1$, that is, corresponding to the decorated quivers

\begin{equation}\label{thefourquivers}
\begin{tikzpicture}[baseline=-60pt]
\draw[o-o, thick] (0,0) -- (1.5,0);
\node[below] at (0.75,0) {$k$};
\draw[-o,thick] (1.5,0) -- (3,0);
\node[below] at (2.25,0) {$n-k$};
\draw[-o,dashed, thick] (3,0) -- (4.5,0);
\node[below] at (3.75,0) {$1$};
\draw[-o,dashed, thick] (4.5,0) -- (6,0);
\node[below] at (5.25,0) {$1$};
\draw[-o,dashed, thick] (6,0) -- (7.5,0);
\node[below] at (6.75,0) {$1$};
\node at (8,0) {$\cdots$};
\draw[-o,dashed, thick] (8.5,0) -- (9.5,0);
\node[below] at (8.8,0) {$1$};
\begin{scope}[yshift=-1.5cm]
\draw[o-o, thick] (0,0) -- (1.5,0);
\node[below] at (0.75,0) {$k$};
\draw[-o,thick] (1.5,0) -- (3,0);
\node[below] at (2.25,0) {$n-k$};
\draw[-o,dashed, thick] (3,0) -- (4.5,0);
\node[below] at (3.75,0) {$1$};
\draw[-o,dashed, thick] (4.5,0) -- (6,0);
\node[below] at (5.25,0) {$1$};
\draw[-o,dashed, thick] (6,0) -- (7.5,0);
\node[below] at (6.75,0) {$1$};
\node at (8,0) {$\cdots$};
\draw[-o,dashed, thick] (8.5,0) -- (9.5,0);
\node[below] at (8.8,0) {$1$};
\node[above] at (0.75,0) {$\ast$};
\end{scope}
\begin{scope}[yshift=-3cm]
\draw[o-o, thick] (0,0) -- (1.5,0);
\node[below] at (0.75,0) {$k$};
\draw[-o,thick] (1.5,0) -- (3,0);
\node[below] at (2.25,0) {$n-k$};
\draw[-o,dashed, thick] (3,0) -- (4.5,0);
\node[below] at (3.75,0) {$1$};
\draw[-o,dashed, thick] (4.5,0) -- (6,0);
\node[below] at (5.25,0) {$1$};
\draw[-o,dashed, thick] (6,0) -- (7.5,0);
\node[below] at (6.75,0) {$1$};
\node at (8,0) {$\cdots$};
\draw[-o,dashed, thick] (8.5,0) -- (9.5,0);
\node[below] at (8.8,0) {$1$};
\node[above] at (2.25,0) {$\ast$};
\end{scope}
\begin{scope}[yshift=-4.5cm]
\draw[o-o, thick] (0,0) -- (1.5,0);
\node[below] at (0.75,0) {$k$};
\draw[-o,thick] (1.5,0) -- (3,0);
\node[below] at (2.25,0) {$n-k$};
\draw[-o,dashed, thick] (3,0) -- (4.5,0);
\node[below] at (3.75,0) {$1$};
\draw[-o,dashed, thick] (4.5,0) -- (6,0);
\node[below] at (5.25,0) {$1$};
\draw[-o,dashed, thick] (6,0) -- (7.5,0);
\node[below] at (6.75,0) {$1$};
\node at (8,0) {$\cdots$};
\draw[-o,dashed, thick] (8.5,0) -- (9.5,0);
\node[below] at (8.8,0) {$1$};
\node[above] at (0.75,0) {$\ast$};
\node[above] at (2.25,0) {$\ast$};
\end{scope}
\end{tikzpicture}.
\end{equation}
Now we give a detailed description of the corresponding varieties and their equivariant cohomology.

\subsection{The spaces $X^{(r)}_{k,n}$}
\label{sec:spaces}
Consider the tautological short exact sequence $0\to S\to \C^n\to Q \to 0$ of vector bundles over $ \Grkn$. Define
\begin{itemize}
\item $X^{\zz}_{k,n}=$total space of $\Hom(Q,S)=\rmTs\! \Grkn$;
\item $X^{\oz}_{k,n}=$total space of $\Hom(\C^n,S)$;
\item $X^{\zo}_{k,n}=$total space of $\Hom(Q,\C^n)$;
\item $X^{\oo}_{k,n}=$total space of $\Hom(S,S)\oplus\Hom(Q,\C^n)=\Hom(\C^n,S)\oplus\Hom(Q,Q)$
\end{itemize}
illustrated in Figure \ref{fig1}.
Several notions and statements below will have four versions, corresponding to these four spaces. The upper index $(r)=(00), (10), (01), (11)$ will always refer to this choice.

\subsection{Torus equivariant cohomology of $\Gr_k\C^n$}\label{sec:GKM}

The natural action of $A=A^n=(\C^\times)^n$ on $\C^n$ induces an action on $ \Grkn$. The fixed points of the action are the coordinate $k$-planes. The one naturally corresponding to the $k$-element subset $I\subset \{1,\ldots,n\}$ will be denoted by $p_I$. The set of $k$-element subsets of $\{1,\ldots,n\}$ will be denoted by $\I_k$.

We have $H^*_A(\pt)=\C[z_1,\ldots,z_n]$, where $z_i$ is the first Chern class of the tautological line bundle over $B(\C^\times)$ (the $i$th $\C^\times$ factor). The $A$ equivariant cohomology ring of any space with an $A$ action is hence a $\C[z_1,\ldots,z_n]$-module.

\begin{figure}
\begin{tikzpicture}[scale=1,baseline=0]
\draw[fill=blue!40] (1,1) to (2,1) to (2,2) to (1,2);
\draw (0,0) to (2,0) to (2,2) to (0,2) to (0,0);
\draw (1,0) to (1,2); \draw (0,1) to (2,1);
\node at (-.3,.5) {$Q$}; \node at (-.3,1.5) {$S$}; \node at (.5,2.3) {$S$}; \node at (1.5,2.3) {$Q$};
\node at (1,-.5) {$(00)$};
\end{tikzpicture}
\qquad\qquad
\begin{tikzpicture}[scale=1,baseline=0]
\draw[fill=blue!40] (0,1) to (2,1) to (2,2) to (0,2);
\draw (0,0) to (2,0) to (2,2) to (0,2) to (0,0);
\draw (1,0) to (1,2); \draw (0,1) to (2,1);
\node at (-.3,.5) {$Q$}; \node at (-.3,1.5) {$S$}; \node at (.5,2.3) {$S$}; \node at (1.5,2.3) {$Q$};
\node at (1,-.5) {$(10)$};
\end{tikzpicture}
\qquad\qquad
\begin{tikzpicture}[scale=1,baseline=0]
\draw[fill=blue!40] (1,0) to (2,0) to (2,2) to (1,2);
\draw (0,0) to (2,0) to (2,2) to (0,2) to (0,0);
\draw (1,0) to (1,2); \draw (0,1) to (2,1);
\node at (-.3,.5) {$Q$}; \node at (-.3,1.5) {$S$}; \node at (.5,2.3) {$S$}; \node at (1.5,2.3) {$Q$};
\node at (1,-.5) {$(01)$};
\end{tikzpicture}
\qquad\qquad
\begin{tikzpicture}[scale=1,baseline=0]
\draw[fill=blue!40] (1,1) to (1,0) to (2,0) to (2,2) to (0,2) to (0,1);
\draw (0,0) to (2,0) to (2,2) to (0,2) to (0,0);
\draw (1,0) to (1,2); \draw (0,1) to (2,1);
\node at (-.3,.5) {$Q$}; \node at (-.3,1.5) {$S$}; \node at (.5,2.3) {$S$}; \node at (1.5,2.3) {$Q$};
\node at (1,-.5) {$(11)$};
\end{tikzpicture}
\caption{Illustration of the bundles over $\Grkn$}\label{fig1}
\end{figure}

Let us recall the description of $H^*_A(\Grkn)$ based on the maps
\begin{equation}\label{eq:Hdesc}
\begin{tikzcd}
\C[\underbrace{t_1,\ldots,t_k}_{S_k},z_1,\ldots,z_n]^{S_k} \ar[r,"q",twoheadrightarrow] &
H^*_A(\Grkn) \ar[r,"\Loc",hook] &
\bigoplus_{I\in \I_k} \underbrace{H^*_A(p_I)}_{=\C[z_1,\ldots,z_n]}.
\end{tikzcd}
\end{equation}
 The $q$-image of the variables $t_i$ are the equivariant Chern roots of the tautological $k$-bundle $S$ over $\Grkn$. They generate $H^*_A(\Grkn)$ over $H^*_A(\pt)$, hence the map $q$ is surjective.

The map $\Loc$ is the restriction (``equivariant localization'') map in cohomology to the union of fixed points. It is injective, and its image has the so-called GKM description \cite{GKM}:

\begin{quotation}{\em
The tuple $(f_I)_{I\in \I_k}$ belongs to the image of $\Loc$ if and only if for any two components $f_I$, $f_J$ satisfying $I=K \cup \{i\}$, $J=K \cup \{j\}$ ($|K|=k-1$, $i\not=j$) we have $(z_i-z_j) |(f_I-f_J)$ in $\C[z_1,\ldots,z_n]$.}
\end{quotation}

\noindent Hence, if we allowed $z_i-z_j$ denominators, ie. by tensoring with $\C(z_1,\ldots,z_n)$, then the $\Loc$ map would become an isomorphism.

The $I$ component of the composition $\Loc \circ q$ is obtained by substituting $t_s=z_{i_s}$ for $I=\{i_1,\ldots,i_k\}$, which we will write as
\begin{equation}\label{eq:AlgLoc}
\Loc \circ q: f(t,z)\mapsto ( f(z_I,z) )_{I\in \I_k}.
\end{equation}

In summary, we have two ways of naming an element in $H^*_A(\Grkn)$. Either by an $\binom{n}{k}$ tuple of polynomials satisfying the GKM condition, or by an element of $\C[{t_1,\ldots,t_k},z_1,\ldots,z_n]^{S_k}$---although this latter element is only unique up to the kernel of \eqref{eq:AlgLoc}.

\subsection{The $\X_n^{(r)}$ spaces, and their $T$ equivariant cohomology}

We define
\[
\Xrn=\bigsqcup_{k=0}^n X^{(r)}_{k,n} \qquad\qquad \text{for $r=00, 10,01,11$}.
\]
The $A=(\C^\times)^n$ action on $\Grkn$ induces an action on  $X_{k,n}^{(r)}$, and hence on $\Xrn$. We let an extra $\C^\times$ (denoted by $\C^\times_{\h}$) act on $X_{k,n}^{(r)}$ (and hence on $\Xrn$) by multiplication in the fibers. Thus we have $\T=\T^n=A \times \C^\times_{\h}$ actions on $X_{k,n}^{(r)}$ and $\Xrn$.

The $X_{k,n}^{(r)}$ spaces are $\T$ equivariantly homotopy equivalent to $\Grkn$, and hence we have
\begin{equation}\label{eq:random}
H^*_{\T}(\Xrn) = \bigoplus_{k=0}^n H_A^*(\Grkn) \otimes \C[\h] \qquad\qquad\qquad \forall r.
\end{equation}

\subsection{The $\Loc$ map on $H^*_{\T}(\Xrn)$} \label{sec:Loc}

We can regard the $\Loc$ map as a map
\[
H^*_{\T}(\Xrn) \to \bigoplus_{I\subset \{1,\ldots,n\}} \C[z_1,\ldots,z_n,\h].
\]
It will be convenient for us to permit rational function coefficients: define $\Hb_n=H^*_{\T}(\Xrn) \otimes \C(z_1,\ldots,z_n,\h)$---we dropped the upper index $r$ because of the independence on $r$, see \eqref{eq:random}. This way we can regard $\Loc$, which is now an isomorphism of $2^n$-dimensional vector spaces over $\C(z_1,\ldots,z_n,\h)$, as
\begin{equation}\label{eq:Loc1}
\begin{tikzcd}
\Hb_n \ar[rr,"\Loc"] & & \bigoplus_{I\subset \{1,\ldots,n\}} \C(z_1,\ldots,z_n,\h). 
\end{tikzcd}
\end{equation}

In Section \ref{sec:stab} we will consider four versions of $n!$ different isomorphisms from right to left in~\eqref{eq:Loc1}: the super stable envelope maps.

\subsection{Tangent weights at torus fixed points}

The tangent space of $X_{k,n}^{(r)}$ at the torus fixed point $p_I$, as a $\T$ representation, will be denoted by $T_I^{(r)}$. It splits to ``horizontal'' and ``vertical'' sub-representations
\[
T^{(r)}_I=T^{(r),hor}_I \oplus T^{(r), ver}_I
\]
where $T^{(r),hor}_I$ is the tangent space of $ \Grkn$ at $p_I$, and $T^{(r), ver}_I$ is the vector bundle defined in Section \ref{sec:spaces} restricted to $p_I$. The weights of $T^{(r),hor}_I$ (called horizontal weights) are $z_j-z_i$ for $i\in I, j\in \bar{I}$. The weights of $T^{(r), ver}_I$, called vertical weights, can be read from Figure \ref{fig1}:

\begin{tabular}{lll}
(r=00) & $z_i-z_j+\h$ & for $i\in I, j\in \bar{I}$,   \\
(r=10) & $z_i-z_s+\h$ & for $i\in I, s\in \{1,\ldots,n\}$, \\
(r=01) & $z_s-z_j+\h$ & for $j\in \bar{I}, s\in \{1,\ldots,n\}$, \\
(r=11) & $z_i-z_j+\h$ &for $i, j\in I$ and \\
     & $z_i-z_j+\h$ &for $i,j \in \bar{I}$ and \\
     & $z_i-z_j+\h$ &for $i\in I, j\in \bar{I}$.
\end{tabular}

\subsection{Repelling and attracting directions}
Given a permutation $\sigma\in S_n$ we call a weight $z_i-z_j+\epsilon \h$ (where $\epsilon=\{0,1\}$)

\begin{tabular}{ll}
{\em $\sigma$-repelling} & if  $\sigma^{-1}(i)>\sigma^{-1}(j)$, \\
{\em $\sigma$-attracting} & if $\sigma^{-1}(i)<\sigma^{-1}(j)$,  \\
{\em $\sigma$-neutral} & if $\sigma^{-1}(i)=\sigma^{-1}(j)$.
\end{tabular}

\noindent In notation we will use the signs $-$, $+$, $0$ referring to repelling, attracting, neutral weights. For fixed $\sigma$ we have the further splitting
\[
T_I^{(r)} =
\underbrace{\left(
T^{(r),hor,\sigma +}_I \oplus T^{(r),hor,\sigma -}_I
\right)}_{T_I^{(r),hor}}
\bigoplus
\underbrace{
\left(
T^{(r),ver,\sigma+}_I \oplus T^{(r),ver,\sigma-}_I \oplus T^{(r),ver,\sigma 0}_I
\right)
}_{T_I^{(r),ver}}
\]
according to $\sigma$-attracting/repelling/neutral directions. The $\T$-equivariant Euler class of these representations will be decorated by indexes the same way. For example we have
\[
e^{(r),hor,\sigma-}_I
=
e(T_I^{(r),hor,\sigma-})
=
\mathop{\prod_{i\in I, j\in \bar{I}}}_{\sigma^{-1}(j)>\sigma^{-1}(i)} (z_j-z_i)
\]
for any $r$, or
\[
e^{(10),ver,\sigma-}_I
=
e(T_I^{(10),ver,\sigma-})
=
\mathop{\prod_{i\in I, s\in \{1,\ldots,n\}}}_{\sigma^{-1}(j)>\sigma^{-1}(s)} (z_i-z_s+\h).
\]

The dimension of the space $T_I^{(r),hor,\sigma -}\oplus T_I^{(r),ver,\sigma -}$ does not depend on $I$ or on $\sigma$; it only depends on $r$. Let us denote this dimension by $d^{(r)}$. That is (cf. Figure \ref{fig2}),
\begin{multline*}
d^{\zz}=k(n-k), \qquad
d^{\oz}=k(n-k)+\binom{k}{2}, \\
d^{\zo}=k(n-k)+\binom{n-k}{2}, \qquad
d^{\oo}=k(n-k)+\binom{k}{2}+\binom{n-k}{2}=\binom{n}{2}.
\end{multline*}

\begin{remark}
The appearance of neutral weights for $r=(10), (01), (11)$ is a novelty. In the language of Section \ref{sec:string} it is due to the fact that a \Dx\ brane and a \NSx\ brane share a common direction $\IRtwx$, so a D2 brane sandwiched between them can move along $\IRtwx$.
\end{remark}

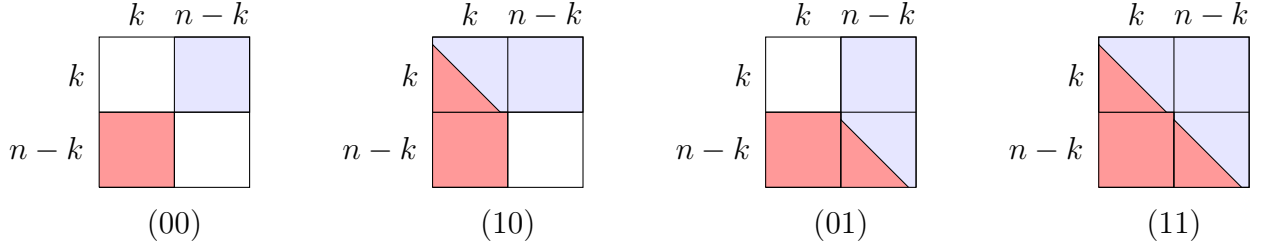
\begin{figure}
\begin{tikzpicture}[scale=1,baseline=0]
\draw[fill=blue!10] (1,1) to (2,1) to (2,2) to (1,2);
\draw[fill=red!40] (0,0) to (1,0) to (1,1) to (0,1);
\draw (0,0) to (2,0) to (2,2) to (0,2) to (0,0);
\draw (1,0) to (1,2); \draw (0,1) to (2,1);
\node at (-.7,.5) {$n-k$}; \node at (-.3,1.5) {$k$}; \node at (.5,2.3) {$k$}; \node at (1.5,2.3) {$n-k$};
\node at (1,-.5) {$(00)$};
\end{tikzpicture}
\qquad
\begin{tikzpicture}[scale=1,baseline=0]
\draw[fill=blue!10] (0,1) to (2,1) to (2,2) to (0,2);
\draw[fill=red!40] (0,0) to (1,0) to (1,1) to (0,1);
\draw[fill=red!40] (0.9,1) to (0,1.9) to (0,1);
\draw (0,0) to (2,0) to (2,2) to (0,2) to (0,0);
\draw (1,0) to (1,2); \draw (0,1) to (2,1);
\node at (-.7,.5) {$n-k$}; \node at (-.3,1.5) {$k$}; \node at (.5,2.3) {$k$}; \node at (1.5,2.3) {$n-k$};
\node at (1,-.5) {$(10)$};
\end{tikzpicture}
\qquad
\begin{tikzpicture}[scale=1,baseline=0]
\draw[fill=blue!10] (2,0) to (2,2) to (1,2) to (1,0);
\draw[fill=red!40] (0,0) to (1,0) to (1,1) to (0,1);
\draw[fill=red!40] (1.9,0) to (1,0.9) to (1,0);
\draw (0,0) to (2,0) to (2,2) to (0,2) to (0,0);
\draw (1,0) to (1,2); \draw (0,1) to (2,1);
\node at (-.7,.5) {$n-k$}; \node at (-.3,1.5) {$k$}; \node at (.5,2.3) {$k$}; \node at (1.5,2.3) {$n-k$};
\node at (1,-.5) {$(01)$};
\end{tikzpicture}
\qquad
\begin{tikzpicture}[scale=1,baseline=0]
\draw[fill=blue!10] (1,1) to (1,0) to (2,0) to (2,2) to (0,2) to (0,1);
\draw[fill=red!40] (0,0) to (1,0) to (1,1) to (0,1);
\draw[fill=red!40] (1.9,0) to (1,0.9) to (1,0);
\draw[fill=red!40] (0.9,1) to (0,1.9) to (0,1);
\draw (0,0) to (2,0) to (2,2) to (0,2) to (0,0);
\draw (1,0) to (1,2); \draw (0,1) to (2,1);
\node at (-.7,.5) {$n-k$}; \node at (-.3,1.5) {$k$}; \node at (.5,2.3) {$k$}; \node at (1.5,2.3) {$n-k$};
\node at (1,-.5) {$(11)$};
\end{tikzpicture}
\caption{Red regions illustrate the dimensions $d^{(r)}$.}\label{fig2}
\end{figure}

\section{Super stable envelopes}\label{sec:stab}

\subsection{Definition}
Let us fix $k,n$ and $\sigma\in S_n$. The map
\[
\begin{tikzcd}
\Stab^{(r)}_{\sigma}: & H^*_{\T}\left((X^{(r)}_{k,n})^{\T}\right) \ar[r] \ar[equals]{d} & H_{\T}^*(X_{k,n}^{(r)}) \ar[equals]{d} \\
& \bigoplus_{I\in \I_k} \C[z,\h] & H^*_{\T}( \Grkn)
\end{tikzcd}
\]
is called the $\sigma$ super stable envelope (map), if the classes $\kappa^{(r)}_{\sigma,I}=\Stab_\sigma^{(r)}\left(1_I\right)$ satisfy the axioms
\begin{description}
\item[A0] $\deg (\kappa^{(r)}_{\sigma,I} ) = d^{(r)}$;
\item[A1] $\kappa^{(r)}_{\sigma,I}|_I= e_I^{(r),ver,\sigma-} e_I^{(r),hor,\sigma-}$;
\item[A2] $\kappa^{(r)}_{\sigma,I}|_J$ is divisible by $\h$ for $J\not= I$;
\item[A3] $\kappa^{(r)}_{\sigma,I}|_J$ is divisible by $e_J^{(r),ver,\sigma-}$ for all $J$.
\end{description}

\noindent In the {\bf A0} axiom we mean that the class is of homogeneous degree $d^{(r)}$ where $\deg z_i=\deg \h=\deg t_i=1$ (that is, degree $d$ classes live in $H^{2d}$).

The $\Stab^{\zz}_\sigma$ maps coincide with the stable envelope maps of Maulik-Okounkov \cite{MO} for the quiver variety $\cup_k \rmTs\!\Gr_k\C^n$.

If the $\Stab_{\sigma}^{(r)}$ maps exist then they are uniquely determined by the axioms. The proof of this statement is the same as the proof of the existence of stable envelopes in the known cases in the literature \cite[Section 3.3.4]{MO} (c.f. \cite[Section 3.1]{RTVtrig}, \cite[Section 7.8]{RTVEllK}). We will prove the existence of stable envelopes in Section \ref{sec:existence}.

Now we give examples for $\kappa^{(r)}_{\sigma,I}$ classes. It is instructive to verify the axioms for these examples.

\subsection{Example: $\PPP^1$}\label{ex:P1}
 Let $n=2, k=1, S_2=\{\id, s\}$. The classes $\kappa^{(r)}_{\sigma,\{i\}}$ are elements of $H^*_{\T}(X_{1,2}^{(r)})=H^*_{\T}(\PPP^1)$. We have two ways of naming such elements, see Section \ref{sec:GKM}, either by a GKM-consistent pair of polynomials in $\C[z_1,z_2]$, or by a representative in $\C[t_1,z_1,z_2]$.  Accordingly, we have
\[
\begin{tabular}{cclcrcl}
$\kappa^{(r)}_{\id,\{1\}}$ & $=$ & $(z_2-z_1$ & $,$ & $0)$ & $=$ & $[z_2-t_1]$, \\
$\kappa^{(r)}_{\id,\{1\}}$ & $=$ & $(\h$ & $,$ & $z_2-z_1+\h)$ & $=$ & $[t_1-z_1+\h]$, \\
$\kappa^{(r)}_{s,\{1\}}$ & $=$ & $(z_1-z_2+\h$ & $,$ & $\h)$ & $=$ & $[t_1-z_2+\h]$, \\
$\kappa^{(r)}_{s,\{1\}}$ & $=$ & $(0$ & $,$ & $z_1-z_2)$ & $=$ & $[z_1-t_1]$
\end{tabular}
\]
for all $r=00,10,01,11$.

\subsection{Example: projective spaces}
For $r=00,10$ the example of Section \ref{ex:P1} generalizes to $k=1$ and arbitrary $n$. Namely, the polynomial
\[
\prod_{b=1}^{i-1}(t_1-z_b+\h) \prod_{b=i+1}^n (z_b-t_1)
\]
represents the classes $\kappa^{\zz}_{\id,\{i\}}=\kappa^{\oz}_{\id,\{i\}}$.

The $k=1$ (i.e. $\PPP^{n-1}$) formulas for $r=01,11$ are less obvious. While the polynomial
\[
\prod_{b=2}^n(z_b-t_1) \prod_{2\leq a<b\leq n} (z_b-z_a+\h)
\]
represents $\kappa_{\id,\{1\}}^{\zo}=\kappa_{\id,\{1\}}^{\oo}$ for any $n$,  no such ``nicely factoring'' polynomial representative exists in general. For example, for $n=4$ the `best' polynomial representative for
\begin{multline}\label{thatone}
\kappa_{\id,\{2\}}^{\zo}=\kappa_{\id,\{2\}}^{\oo}=
\Big(
(z_3-z_1)(z_4-z_1)(z_3-z_2+\h)(z_4-z_2+\h)(z_4-z_3),
\\
(z_3-z_2)(z_4-z_2)(z_2-z_1+\h)(z_3-z_1+\h)(z_4-z_1+\h)(z_4-z_3+\h),0,0\Big)
\end{multline}
we found is
\begin{multline*}
(t-z_1+\h)(z_3-t)(z_4-t)(z_4-z_3+\h)\times \\
(-t^2+t(z_3+z_4+2\h)+\h^2+\h(-2z_1-2z_2+z_3+z_4)+z_1^2+z_2^2+z_3z_4-(z_1+z_2)(z_3+z_4)).
\end{multline*}
For general $r,k,n$ neither the fixed point restrictions nor the polynomial representatives are products of linear factors.  In Sections~\ref{sec:weight}--\ref{sec:existence} we will use a further algebraic trick to name the $\kappa^{(r)}_{\sigma,I}$ classes.

\begin{remark}
In our description of $H^*_{\T}$ of Grassmannians we permitted the Chern roots $t_1,\ldots,t_k$ of the tautological bundles. If we included the Chern roots, say, $t'_1,\ldots,t'_{n-k}$ of the quotient bundle as well, we would have more freedom to name polynomial representatives of $\kappa$ classes. However, that approach has disadvantages when considering quivers instead of Grassmannians, so we do not pursue it.
\end{remark}

\section{Super weight functions}\label{sec:weight}

In this section we introduce four versions of rational functions in the variables
\begin{equation}\label{vars}
\begin{tabular}{ll}
$t_1,t_2,\ldots,t_k$ & (``Chern root variables''), \\
$z_1,z_2,\ldots\ldots,z_n$ & (``equivariant variables''),
\end{tabular}
\end{equation}
that will---in an implicit way---provide formulas for the super stable envelopes of Section \ref{sec:stab}. The $r=00$ version is (up to convention changes) the Tarasov-Varchenko weight function (\cite{TV,RTVpartial}), the other ones are superalgebra generalizations of it.

As before, $k\leq n$ are non-negative integers, and the set of $k$-element subsets of $\{1,2,\ldots,n\}$ is denoted by $\I_k$. For $I\in \I_k$ we will use the notation $I=\{i_1<i_2<\ldots<i_k\}$, and we define
\[
\Sym_k f(t_1,\ldots,t_k)=\sum_{\tau\in S_k} (\tau f)=\sum_{\tau\in S_k} f(t_{\tau(1)},\ldots,t_{\tau(k)}).
\]

\subsection{Version $r=00$} (The classical nilpotent version.) Consider the rational function
$
W^{\zz}_I=\Sym_k (U^{\zz}_I)
$
where
\[
U^{\zz}_I=
\prod_{a=1}^k \left(
\prod_{b=1}^{i_a-1} (t_a-z_b+\h)
\prod_{b=i_a+1}^n (z_b-t_a)\right)
\cdot
\prod_{a=1}^k\prod_{b=a+1}^k \frac{1}{(t_b-t_a+\h)(t_b-t_a)}.
\]
For a permutation $\sigma\in S_n$ define the (cohomological) {\em $r=00$ super weight function}
\[
W^{\zz}_{\sigma,I}
=
W^{\zz}_{\sigma^{-1}(I)}(t_1,\ldots,t_k,z_{\sigma(1)},\ldots,z_{\sigma(n)}).
\]

\begin{example} \rm Let $n=2$ and $S_2=\{\id,s\}$. We have
\[
\def\arraystretch{1.6}
\begin{tabular}{ll}
$W^{\zz}_{\id, \{\}} =1$ & $W^{\zz}_{s,\{\}}=1$ \\
$W^{\zz}_{\id, \{1\}} = z_2-t_1$ & $W^{\zz}_{s,\{1\}}=t_1-z_2+\h$ \\
$W^{\zz}_{\id, \{2\}} = t_1-z_1+\h$ & $W^{\zz}_{s,\{2\}}=z_1-t_1$ \\
$W^{\zz}_{\id, \{1,2\}} = \Sym_2 \dfrac{(t_2-z_1+\h)(z_2-t_1)}{(t_2-t_1+\h)(t_2-t_1)}$
& $W^{\zz}_{s,\{1,2\}}= \Sym_2 \dfrac{(t_2-z_2+\h)(z_1-t_1)}{(t_2-t_1+\h)(t_2-t_1)}$.
\end{tabular}
\]
We invite the reader to verify that
\[
W^{\zz}_{\id,\{1,2\}}|_{t_1=z_1,t_2=z_2}=W^{\zz}_{s,\{1,2\}}|_{t_1=z_1,t_2=z_2}=1,
\]
and that
\[
\renewcommand\arraystretch{1.6}
\left[\begin{array}{c}
W^{\zz}_{s,\{\}} \\
W^{\zz}_{s,\{1\}} \\
W^{\zz}_{s,\{2\}} \\
W^{\zz}_{s,\{1,2\}}
\end{array}\right]
=
\left[\begin{array}{c|cc|c}
1 & 0 & 0 & 0 \\ \hline
0 & \frac{z_1-z_2}{z_2-z_1+\h} & \frac{\h}{z_2-z_1+\h} & 0 \\
0 & \frac{\h}{z_2-z_1+\h} & \frac{z_1-z_2}{z_2-z_1+\h} & 0 \\\hline
0 & 0 & 0 & 1
\end{array}\right]
\left[\begin{array}{c}
W^{\zz}_{\id,\{\}} \\
W^{\zz}_{\id,\{1\}} \\
W^{\zz}_{\id,\{2\}} \\
W^{\zz}_{\id,\{1,2\}}
\end{array}\right].
\]
\end{example}

\subsection{Version $r=10$} Consider the rational function\footnote{in fact this one is a polynomial, c.f. the proof of Proposition \ref{prop:poly}.}
$
W^{\oz}_I=\Sym_k (U^{\oz}_I)
$
where
\[
U^{\oz}_I=
\prod_{a=1}^k \left(
\prod_{b=1}^{i_a-1} (t_a-z_b+\h)
\prod_{b=i_a+1}^n (z_b-t_a)\right)
\cdot
\prod_{a=1}^k\prod_{b=a+1}^k \frac{1}{(t_b-t_a)}.
\]
For a permutation $\sigma\in S_n$ define the (cohomological) {\em $r=10$ super weight function}
\[
W^{\oz}_{\sigma,I}
=
W^{\oz}_{\sigma^{-1}(I)}(t_1,\ldots,t_k,z_{\sigma(1)},\ldots,z_{\sigma(n)}).
\]

\begin{example} \rm
Let $n=2$ and $S_2=\{\id,s\}$. We have
\[
\renewcommand\arraystretch{1.6}
\begin{tabular}{ll}
$W^{\oz}_{\id, \{\}} =1$ & $W^{\oz}_{s,\{\}}=1$ \\
$W^{\oz}_{\id, \{1\}} = z_2-t_1$ & $W^{\oz}_{s,\{1\}}=t_1-z_2+\h$ \\
$W^{\oz}_{\id, \{2\}} = t_1-z_1+\h$ & $W^{\oz}_{s,\{2\}}=z_1-t_1$ \\
$W^{\oz}_{\id, \{1,2\}} = \Sym_2 \dfrac{(t_2-z_1+\h)(z_2-t_1)}{(t_2-t_1)}$
& $W^{\oz}_{s,\{1,2\}}= \Sym_2 \dfrac{(t_2-z_2+\h)(z_1-t_1)}{(t_2-t_1)}$ \\
\ \hskip 1.4 true cm $=z_2-z_1+\h$ & \ \hskip 1.28 true cm $=z_1-z_2+\h$.
\end{tabular}
\]
We invite the reader to verify that
\[
\renewcommand\arraystretch{1.6}
\left[\begin{array}{c}
W^{\oz}_{s,\{\}} \\
W^{\oz}_{s,\{1\}} \\
W^{\oz}_{s,\{2\}} \\
W^{\oz}_{s,\{1,2\}}
\end{array}\right]
=\left[\begin{array}{c|cc|c}
1 & 0 & 0 & 0 \\ \hline
0 & \frac{z_1-z_2}{z_2-z_1+\h} & \frac{\h}{z_2-z_1+\h} & 0 \\
0 & \frac{\h}{z_2-z_1+\h} & \frac{z_1-z_2}{z_2-z_1+\h} & 0 \\\hline
0 & 0 & 0 & \frac{z_1-z_2+\h}{z_2-z_1+\h}
\end{array}\right]
\left[\begin{array}{c}
W^{\oz}_{\id,\{\}} \\
W^{\oz}_{\id,\{1\}} \\
W^{\oz}_{\id,\{2\}} \\
W^{\oz}_{\id,\{1,2\}}
\end{array}\right].
\]
\end{example}

\subsection{Version $r=01$} Consider the rational function
$
W^{\zo}_I=\Sym_k (U^{\zo}_I)
$
where
\begin{multline*}
U^{\zo}_I=
\prod_{a=1}^k \left(
\prod_{b=1}^{i_a-1} (z_b-t_a+\h)
\prod_{b=i_a+1}^n (z_b-t_a)\right)
\cdot
\prod_{a=1}^k\prod_{b=a+1}^k \frac{1}{(t_b-t_a)}
\times\\
\h^k\prod_{a=1}^n\prod_{b=a+1}^n(z_b-z_a+\h) \prod_{a=1}^k\prod_{b=1}^n \frac{1}{z_b-t_a+\h}.
\end{multline*}
For a permutation $\sigma\in S_n$ define the (cohomological) {\em $r=01$ super weight function}
\[
W^{\zo}_{\sigma,I}
=
W^{\zo}_{\sigma^{-1}(I)}(t_1,\ldots,t_k,z_{\sigma(1)},\ldots,z_{\sigma(n)}).
\]

\begin{example} \rm
Let $n=2$ and $S_2=\{\id,s\}$. We have
\[
\renewcommand\arraystretch{1.6}
\begin{tabular}{ll}
$W^{\zo}_{\id, \{\}} =z_2-z_1+\h$ & $W^{\zo}_{s,\{\}}=z_1-z_2+\h$ \\
$W^{\zo}_{\id, \{1\}} =\dfrac{\h(z_2-t_1)(z_2-z_1+\h)}{(z_1-t_1+\h)(z_2-t_1+\h)}$ &
  $W^{\zo}_{s,\{1\}}=\dfrac{\h(z_1-z_2+\h)}{z_1-t_1+\h}$ \\
$W^{\zo}_{\id, \{2\}} = \dfrac{\h(z_2-z_1+\h)}{(z_2-t_1+\h)}$ &
$W^{\zo}_{s,\{2\}}=\dfrac{\h(z_1-t_1)(z_1-z_2+\h)}{(z_1-t_1+\h)(z_2-t_1+\h)}$ \\
$W^{\zo}_{\id, \{1,2\}} = \dfrac{\h^2(z_2-z_1+\h)(z_1-z_2+\h)}{\prod_{i=1}^2\prod_{j=1}^2 (z_i-t_j+\h)}$  & $W^{\zo}_{s,\{1,2\}}= \dfrac{\h^2(z_2-z_1+\h)(z_1-z_2+\h)}{\prod_{i=1}^2\prod_{j=1}^2 (z_i-t_j+\h)}$.
\end{tabular}
\]
We invite the reader to verify that
\[
W^{\zo}_{\id,\{1,2\}}|_{t_1=z_1,t_2=z_2}=W^{\zo}_{s,\{1,2\}}|_{t_1=z_1,t_2=z_2}=1,
\]
and that
\[
\renewcommand\arraystretch{1.6}
\left[\begin{array}{c}
W^{\zo}_{s,\{\}} \\
W^{\zo}_{s,\{1\}} \\
W^{\zo}_{s,\{2\}} \\
W^{\zo}_{s,\{1,2\}}
\end{array}\right]
=\left[\begin{array}{c|cc|c}
 \frac{z_1-z_2+\h}{z_2-z_1+\h} & 0 & 0 & 0 \\ \hline
0 & \frac{z_1-z_2}{z_2-z_1+\h} & \frac{\h}{z_2-z_1+\h} & 0 \\
0 & \frac{\h}{z_2-z_1+\h} & \frac{z_1-z_2}{z_2-z_1+\h} & 0 \\\hline
0 & 0 & 0 & 1
\end{array}\right]
\left[\begin{array}{c}
W^{\zo}_{\id,\{\}} \\
W^{\zo}_{\id,\{1\}} \\
W^{\zo}_{\id,\{2\}} \\
W^{\zo}_{\id,\{1,2\}}
\end{array}\right].
\]
\end{example}

\subsection{Version $r=11$} Consider the rational function
$
W^{\oo}_I=\Sym_k (U^{\oo}_I)
$
where
\begin{multline*}
U^{\oo}_I=
\prod_{a=1}^k \left(
\prod_{b=1}^{i_a-1} (z_b-t_a+\h)
\prod_{b=i_a+1}^n (z_b-t_a)\right)
\cdot
\prod_{a=1}^k\prod_{b=a+1}^k \frac{(t_b-t_a+\h)}{(t_b-t_a)}
\times\\
h^k
\prod_{a=1}^n\prod_{b=a+1}^n(z_b-z_a+\h) \prod_{a=1}^k\prod_{b=1}^n \frac{1}{z_b-t_a+\h}.
\end{multline*}
For a permutation $\sigma\in S_n$ define the (cohomological) {\em $r=11$ super weight function}
\[
W^{\oo}_{\sigma,I}
=
W^{\oo}_{\sigma^{-1}(I)}(t_1,\ldots,t_k,z_{\sigma(1)},\ldots,z_{\sigma(n)}).
\]

\begin{example} \rm
Let $n=2$ and $S_2=\{\id,s\}$. We have
\[
\renewcommand\arraystretch{1.6}
\begin{tabular}{ll}
$W^{\oo}_{\id, \{\}} =z_2-z_1+\h$ & $W^{\oo}_{s,\{\}}=z_1-z_2+\h$ \\
$W^{\oo}_{\id, \{1\}} =\dfrac{\h(z_2-t_1)(z_2-z_1+\h)}{(z_1-t_1+\h)(z_2-t_1+\h)}$ &
  $W^{\oo}_{s,\{1\}}=\dfrac{\h(z_1-z_2+\h)}{z_1-t_1+\h}$ \\
$W^{\oo}_{\id, \{2\}} = \dfrac{\h(z_2-z_1+\h)}{(z_2-t_1+\h)}$ &
$W^{\oo}_{s,\{2\}}=\dfrac{\h(z_1-t_1)(z_1-z_2+\h)}{(z_1-t_1+\h)(z_2-t_1+\h)}$ \\
$W^{\oo}_{\id, \{1,2\}} = \Sym_2 \frac{\h^2(z_2-z_1+\h)(t_2-t_1+\h)(z_2-t_1)}{(t_2-t_1)(z_1-t_1+\h)(z_2-t_1+\h)(z_2-t_2+\h)}$ \\
&  $W^{\oo}_{s,\{1,2\}}= \Sym_2 \frac{\h^2(z_1-z_2+\h)(t_2-t_1+\h)(z_1-t_1)}{(t_2-t_1)(z_1-t_1+\h)(z_1-t_2+\h)(z_2-t_1+\h)}$.
\end{tabular}
\]
We invite the reader to verify that
\[
W^{\oo}_{\id,\{1,2\}}|_{t_1=z_1,t_2=z_2}=z_2-z_1+\h,
\qquad
W^{\oo}_{s,\{1,2\}}|_{t_1=z_1,t_2=z_2}=z_1-z_2+\h,
\]
and that
\[
\renewcommand\arraystretch{1.6}
\left[\begin{array}{c}
W^{\oo}_{s,\{\}} \\
W^{\oo}_{s,\{1\}} \\
W^{\oo}_{s,\{2\}} \\
W^{\oo}_{s,\{1,2\}}
\end{array}\right]
=\left[\begin{array}{c|cc|c}
 \frac{z_1-z_2+\h}{z_2-z_1+\h} & 0 & 0 & 0 \\ \hline
0 & \frac{z_1-z_2}{z_2-z_1+\h} & \frac{\h}{z_2-z_1+\h} & 0 \\
0 & \frac{\h}{z_2-z_1+\h} & \frac{z_1-z_2}{z_2-z_1+\h} & 0 \\\hline
0 & 0 & 0 &  \frac{z_1-z_2+\h}{z_2-z_1+\h}
\end{array}\right]
\left[\begin{array}{c}
W^{\oo}_{\id,\{\}} \\
W^{\oo}_{\id,\{1\}} \\
W^{\oo}_{\id,\{2\}} \\
W^{\oo}_{\id,\{1,2\}}
\end{array}\right].
\]
\end{example}

\section{Properties of super weight functions}\label{sec:properties}

In this section we will show interpolation and recursion (so-called R-matrix-) properties of super weight functions.

\subsection{Interpolation properties}

The function $W^{(r)}_{\sigma,I}$ is a rational function in $t_1,\ldots,t_k$, $z_1,\ldots,z_n$, $\h$, of homogeneous degree $d^{(r)}$, symmetric in the $t_i$ variables. For $I,J\in \I_k$, we will write $W^{(r)}_{\sigma,J}(z_I,z,\h)$ for the---{\em a priori} rational function---obtained by substituting $t_s=z_{i_s}$ for $I=\{i_1,\ldots,i_k\}$ in $W^{(r)}_{\sigma,J}$, cf. \eqref{eq:AlgLoc}.

\begin{proposition}\label{prop:poly}
The function $W^{(r)}_{\sigma,I}(z_J,z,\h)$ is a {\em polynomial} in $z_1,\ldots,z_n,\h$, for all $I, J\in \I_k$.
\end{proposition}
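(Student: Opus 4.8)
The plan is to reduce to $\sigma=\id$ and then examine the symmetrization $\Sym_k$ one summand at a time. First I would dispose of $\sigma$: by definition $W^{(r)}_{\sigma,I}$ is $W^{(r)}_{\sigma^{-1}(I)}$ with $z_a$ replaced by $z_{\sigma(a)}$, and since $W^{(r)}_{\sigma^{-1}(I)}$ is symmetric in $t_1,\dots,t_k$, evaluating at $t_s=z_{j_s}$ and then relabeling $z_{\sigma(a)}\mapsto z_a$ identifies $W^{(r)}_{\sigma,I}(z_J,z,\h)$ with $W^{(r)}_{\sigma^{-1}(I)}(z_{J'},z,\h)$, where $J'=\sigma^{-1}(J)$ reordered increasingly. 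So it suffices to show $W^{(r)}_I(z_J,z,\h)$ is a polynomial for all $I,J\in\I_k$. Writing $J=\{j_1<\dots<j_k\}$, I would expand
\[
W^{(r)}_I(z_J,z,\h)=\sum_{\tau\in S_k}T_\tau,\qquad T_\tau:=U^{(r)}_I\big|_{t_a=z_{j_{\tau(a)}}}.
\]

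The heart of the matter will be to show that each $T_\tau$ is either identically $0$ or, after the obvious cancellations, equals $P_\tau/\!\prod_{1\le c<d\le k}(z_{j_{\tau(d)}}-z_{j_{\tau(c)}})$ for some polynomial $P_\tau$ in $z,\h$ — that is, its only poles lie on the diagonals $z_p=z_q$ with $p,q\in J$. I would argue as follows. If $j_{\tau(a)}>i_a$ for some $a$, then the numerator factor $\prod_{b=i_a+1}^{n}(z_b-t_a)$ produces the vanishing factor $z_{j_{\tau(a)}}-z_{j_{\tau(a)}}$, so $T_\tau\equiv 0$; hence we may assume $j_{\tau(a)}\le i_a$ for all $a$. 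For $r=\oz$ the only denominators are the $t_b-t_a$, so nothing further is needed (in passing this reproves the footnote that $W^{\oz}_I$ is already a polynomial). For $r=\zo,\oo$, cancel $\prod_{b=1}^{i_a-1}(z_b-t_a+\h)$ against $\prod_{b=1}^{n}(z_b-t_a+\h)^{-1}$, leaving $\prod_{b=i_a}^{n}(z_b-t_a+\h)^{-1}$; after substitution the factor with $b=j_{\tau(a)}$ (present only when $j_{\tau(a)}=i_a$) equals $\h$ and is absorbed by the prefactor $\h^k$, while every remaining factor $z_b-z_{j_{\tau(a)}}+\h$ has $b\ge i_a\ge j_{\tau(a)}$, hence $b>j_{\tau(a)}$, and so is cancelled by the numerator $\prod_{1\le p<q\le n}(z_q-z_p+\h)$ (the cancelling factors required for different $a$ have distinct smaller index $j_{\tau(a)}$ and, since $j_{\tau(a)}\le i_a$, are all available). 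For $r=\zz$, the denominator factor $t_d-t_c+\h$ becomes $z_{j_{\tau(d)}}-z_{j_{\tau(c)}}+\h$; since $I=\{i_1<\dots<i_k\}$ is increasing and $j_{\tau(c)}\le i_c<i_d$, the index $j_{\tau(c)}$ lies in $\{1,\dots,i_d-1\}$, so the numerator carries the matching factor $t_d-z_{j_{\tau(c)}}+\h$, which cancels it. In every case the only surviving denominator is $\prod_{c<d}(z_{j_{\tau(d)}}-z_{j_{\tau(c)}})$, coming from $\prod_{a<b}(t_b-t_a)^{-1}$.

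To finish I would set $V:=\prod_{1\le c<d\le k}(z_{j_d}-z_{j_c})$ and use $\prod_{c<d}(z_{j_{\tau(d)}}-z_{j_{\tau(c)}})=\operatorname{sgn}(\tau)\,V$, so that $T_\tau=\operatorname{sgn}(\tau)P_\tau/V$ and $W^{(r)}_I(z_J,z,\h)=V^{-1}\sum_\tau\operatorname{sgn}(\tau)P_\tau=V^{-1}Q$ with $Q$ a polynomial. Since $W^{(r)}_I(z_J,z,\h)$ is symmetric in the variables $\{z_j:j\in J\}$ (it is the restriction of a function symmetric in the $t$'s), $Q=V\cdot W^{(r)}_I(z_J,z,\h)$ is antisymmetric in those $k$ variables and hence divisible by the Vandermonde $V$; therefore $W^{(r)}_I(z_J,z,\h)=Q/V$ is a polynomial.

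The only genuine work is the bookkeeping in the middle paragraph — matching the linear numerator and denominator factors after the substitution — and the decisive inputs are exactly that a nonzero summand forces $j_{\tau(a)}\le i_a$, that $I$ is increasing (needed for $r=\zz$), and the presence of the prefactor $\h^k$ together with $\prod_{p<q}(z_q-z_p+\h)$ (needed for $r=\zo,\oo$). This is presumably the algebraic trick mentioned just before the statement; nothing here is deep, but it is the step that genuinely uses the explicit form of $U^{(r)}_I$.
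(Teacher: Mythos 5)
Your proof is correct and follows essentially the same route as the paper's: the decisive step in both is the term-by-term verification that, after the substitution, the $\h$-shifted denominator factors of each summand are cancelled by factors of its numerator (using $j_{\tau(a)}\le i_a$ for nonvanishing terms), combined with a symmetry/antisymmetry argument to dispose of the Vandermonde factors. The only cosmetic difference is that you remove the Vandermonde after substitution (antisymmetry of $Q$ in the variables $z_j$, $j\in J$) whereas the paper removes it before substitution (the $t_b-t_a$ poles of $\Sym_k(U^{(r)}_J)$ cancel under symmetrization); your write-up spells out the bookkeeping that the paper dismisses as ``easily verified.''
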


\begin{proof}
The denominator of the $U^{(r)}_J$ function is $\prod_{a<b}^k (t_b-t_a)$ times possibly (depending on $r$) some factors of the form $(z_i-t_j+\h)$ and $(t_i-t_j+\h)$. After symmetrization $W^{(r)}_J=\Sym_k(U^{(r)}_J)$ the $(t_b-t_a)$ factors cancel, because the numerator will have poles at $t_b=t_a$ as well. The $(z_i-t_j+\h)$ and $(t_i-t_j+\h)$ factors may not cancel in $W^{(r)}_J$.

What we need to show is that {\em after the substitution} $t_s=z_{i_s}$, the resulting $(z_i-z_j+\h)$ factors cancel in the sum. This holds, because of the structure of the numerators. It is easily verified in each of the $r=00, 10, 01, 11$ cases, that a term $(\tau U^{(r)}_J)(z_I,z,\h)$ (for $\tau\in S_n$) either vanishes, or is a polynomial, i.e. the $(z_i-t_j+\h)$ factors appearing in the denominator also appear in the numerator.
\end{proof}

Now we make three propositions about the $W^{(r)}_{\sigma,J}(z_I,z,\h)$ substitutions. Each one of the three is a ``soft theorem'' in the sense that they hold {\em termwise} for $W^{(r)}_{J}(z_I,z,\h)=\sum_{\tau\in S_n} (\tau U^{(r)}_J)(z_I,z,\h)$. That is, the combinatorics of the definition of $U^{(r)}_I$ imply the statements, not the sophisticated addition of $n!$ rational functions.

\begin{proposition}\label{prop:principal}
We have
\begin{equation}\label{eq:princ}
W^{(r)}_{\sigma,I}(z_I,z,\h)=
\mathop{\prod_{1\leq a<b\leq n}}_{\clubsuit} (z_{\sigma(b)}-z_{\sigma(a)})
\cdot
\mathop{\prod_{1\leq b<a\leq n}}_{\spadesuit} (z_{\sigma(a)}-z_{\sigma(b)}+\h)
\end{equation}
where
\[
\clubsuit= \sigma(a)\in I,\sigma(b)\in \bar{I}
\]
\begin{equation}\label{eq:spade}
\spadesuit=
\begin{cases}
\sigma(a) \in I, \sigma(b)\in \bar{I} & \text{for } r=00, \\
\sigma(a) \in I & \text{for } r=10, \\
\sigma(b) \in \bar{I} & \text{for } r=01, \\
\sigma(a) \in I \text{ or } (\sigma(a)\in \bar{I} \text{ and } \sigma(b)\in \bar{I}) & \text{for } r=11.\\
\end{cases}
\end{equation}
\end{proposition}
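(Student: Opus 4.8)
The plan is to evaluate the principal specialization $W^{(r)}_{\sigma,I}(z_I,z,\h)$ one summand at a time in $\Sym_k$. I expect all but one of the $k!$ summands to vanish identically after the substitution $t_s=z_{i_s}$, and the unique survivor to collapse --- by cancelling the Chern-root factors against parts of the remaining factors of $U^{(r)}_I$ --- exactly to the product on the right-hand side of \eqref{eq:princ}. In particular the identity will hold termwise, which is the ``soft'' character announced before the proposition.

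To set up, write $J=\sigma^{-1}(I)=\{l_1<\dots<l_k\}$ and $w_b=z_{\sigma(b)}$, so that $W^{(r)}_{\sigma,I}=\Sym_k\bigl(U^{(r)}_J(t_1,\dots,t_k,w_1,\dots,w_n)\bigr)$ and the substitution $t_s=z_{i_s}$ reads $t_s=w_{p_s}$ with $p_s=\sigma^{-1}(i_s)$. Since $\{p_1,\dots,p_k\}=\sigma^{-1}(I)=J$ as a set, $s\mapsto p_s$ is a bijection onto $\{l_1<\dots<l_k\}$; let $\rho\in S_k$ be its sorting permutation, so $p_s=l_{\rho(s)}$. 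The $\tau$-summand of $\Sym_k$ evaluated at $t_s=w_{p_s}$ is $U^{(r)}_J$ with each $t_a$ replaced by $w_{p_{\tau(a)}}$. Every $U^{(r)}_J$ contains the factor $\prod_{b=l_a+1}^{n}(w_b-t_a)$, which after the replacement contributes $w_{p_{\tau(a)}}-w_{p_{\tau(a)}}=0$ as soon as $p_{\tau(a)}>l_a$. Since $a\mapsto p_{\tau(a)}$ is again a bijection onto $\{l_1<\dots<l_k\}$, the only way to avoid all these zeros is $p_{\tau(a)}=l_a$ for every $a$, i.e.\ $\tau=\rho^{-1}$; and for $\tau\neq\rho^{-1}$ the summand is unambiguously $0$, since every denominator of $U^{(r)}_J$ is a product of factors $w_c-w_d$ with $c\neq d$ or of factors containing $\h$, hence stays nonzero (cf.\ the proof of Proposition~\ref{prop:poly}). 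Thus $W^{(r)}_{\sigma,I}(z_I,z,\h)=U^{(r)}_J\big|_{t_a=w_{l_a}}$.

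The second step is to evaluate $U^{(r)}_J\big|_{t_a=w_{l_a}}$. Splitting every product over $b\in\{1,\dots,n\}$ into its $b\in J$ and $b\notin J$ parts, one checks that the Chern-root factors --- $\prod_{a<b}(t_b-t_a)^{-1}$, and, for the relevant $r$, also the $\h$-shifted $t$-$t$ factor, the leading $\h^k$, and the $z$-$t$ denominator $\prod_a\prod_b(w_b-w_{l_a}+\h)^{-1}$ --- cancel precisely the $b\in J$ contributions of the surviving numerators (together with those of the Vandermonde $\prod_{1\leq a<b\leq n}(w_b-w_a+\h)$ in the cases $r=01,11$). After the dictionary $w_c=z_{\sigma(c)}$, with $c\in J\iff\sigma(c)\in I$ and $c<d\iff\sigma^{-1}(\sigma(c))<\sigma^{-1}(\sigma(d))$, the remaining factors $(w_b-w_{l_a})$ with $b\notin J$, $b>l_a$ assemble into the $\clubsuit$-product, and the remaining $\h$-shifted factors $(w_d-w_c+\h)$ are indexed by exactly the pairs of the set $\spadesuit$ of \eqref{eq:spade}. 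I would organize the case analysis around $r=00$ (the classical Tarasov--Varchenko principal specialization) as a base case: passing to $r=10$ multiplies the surviving summand literally by $\prod_{a<b}(t_b-t_a+\h)\mapsto\prod_{a<b}(w_{l_b}-w_{l_a}+\h)$, adjoining the pairs ``$\sigma(a),\sigma(b)\in I$'' to $\spadesuit$; passing to $r=01$ trades the surviving $b\notin J$ part of the first product for an equal contribution extracted from the Vandermonde and additionally adjoins the pairs ``$\sigma(a),\sigma(b)\in\bar{I}$'', i.e.\ $\prod_{c<d,\;c,d\notin J}(w_d-w_c+\h)$; and $r=11$ combines both modifications.

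Step~1 is short and uniform; the real work, and the place most prone to error, is the cancellation bookkeeping in Step~2 for $r=01$ and $r=11$, where the three extra ingredients of $U^{(r)}$ --- the $\h^k$, the Vandermonde $\prod_{1\leq a<b\leq n}(z_b-z_a+\h)$, and the denominator $\prod_a\prod_b(z_b-t_a+\h)^{-1}$ --- all interact under $t_a=w_{l_a}$. One must verify that the Vandermonde is left with precisely the factors combining into the $\spadesuit$-set ``$\sigma(a)\in I$ or ($\sigma(a),\sigma(b)$ both in $\bar{I}$)'' and no more; and in the $r=11$ case that a careful tally of the $\h$-shifted factors indexed by pairs inside $J$ --- occurring in the first product, in the $t$-$t$ numerator and in the Vandermonde, and, with the opposite sign of the argument, in the $z$-$t$ denominator --- leaves exactly one surviving copy, matching the ``$\sigma(a),\sigma(b)\in I$'' portion of $\spadesuit$. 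Since all of this happens inside the single surviving summand, the identity is genuinely termwise, as claimed.
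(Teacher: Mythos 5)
Your proposal is correct and follows essentially the same route as the paper's proof: reduce the symmetrization to its unique non-vanishing term (forced by the factors $\prod_{b=i_a+1}^n(z_b-t_a)$, the term $\tau=\id$ after sorting) and then evaluate that single term, checking that the Chern-root denominators and, for $r=01,11$, the $\h^k$, the Vandermonde and the $(z_b-t_a+\h)$ denominators cancel to leave exactly the $\clubsuit$- and $\spadesuit$-products. The paper states this more tersely ("by inspection"), but your bookkeeping — including the tally of $\h$-shifted factors for pairs inside $J$ in the $r=11$ case — is accurate.
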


\begin{proof}
It is enough to prove the statement for $\sigma=\id$. It follows by inspection that at the substitution $t_s=z_{i_s}$ into $W^{(r)}_{I}=\sum_{\tau\in S_k} (\tau U^{(r)}_I)$ exactly one term is not 0, the term corresponding to $\tau=\id$. Substituting $t_s=z_{i_s}$ into the $\tau=\id$ term we obtain the right hand side of \eqref{eq:princ}.
\end{proof}

Observe that on the right hand side of \eqref{eq:princ} the first product  equals $e_I^{(r),ver,\sigma-}$, and the second product equals $e_I^{(r),hor,\sigma-}$.

\begin{proposition}\label{prop:h-div}
For $J\not=I$ the polynomial $W^{(r)}_{\sigma,J}(z_I,z,\h)$ is divisible by $\h$ in $\C[z_1,$ $\ldots,$ $z_n,\h]$.
\end{proposition}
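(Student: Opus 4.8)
The plan is to follow the template of the proof of Proposition~\ref{prop:principal}: first reduce to $\sigma=\id$, then reduce divisibility by $\h$ to the vanishing of the polynomial at $\h=0$, and finally verify this vanishing \emph{termwise} in the symmetrization $W^{(r)}_J(z_I,z,\h)=\sum_{\tau\in S_k}(\tau U^{(r)}_J)(z_I,z,\h)$. Write $I=\{i_1<\dots<i_k\}$ and $J=\{j_1<\dots<j_k\}$; here $(\tau U^{(r)}_J)(z_I,z,\h)$ denotes the substitution $t_a\mapsto z_{i_{\tau(a)}}$ in $U^{(r)}_J$. For the reduction to $\sigma=\id$: since $W^{(r)}_J$ is symmetric in $t_1,\dots,t_k$, substituting $t_a\mapsto z_{i_a}$ into $W^{(r)}_{\sigma,J}=W^{(r)}_{\sigma^{-1}(J)}(t,z_{\sigma(1)},\dots,z_{\sigma(n)})$ and relabelling $y_a:=z_{\sigma(a)}$ identifies $W^{(r)}_{\sigma,J}(z_I,z,\h)$ with $W^{(r)}_{\sigma^{-1}(J)}(y_{\sigma^{-1}(I)},y,\h)$, and $J\neq I\iff\sigma^{-1}(J)\neq\sigma^{-1}(I)$. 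By Proposition~\ref{prop:poly}, $W^{(r)}_J(z_I,z,\h)$ is a polynomial in $z_1,\dots,z_n,\h$, so it is enough to show $W^{(r)}_J(z_I,z,0)=0$ when $J\neq I$.

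Next I would check that each summand $(\tau U^{(r)}_J)(z_I,z,\h)$ is regular at $\h=0$, so that $W^{(r)}_J(z_I,z,0)=\sum_{\tau\in S_k}(\tau U^{(r)}_J)(z_I,z,0)$. For $r=00,10$ this is clear: the only denominators of $U^{(r)}_J$ are products of $(t_b-t_a)$ and $(t_b-t_a+\h)$, which become $z_{i_{\tau(b)}}-z_{i_{\tau(a)}}$ and $z_{i_{\tau(b)}}-z_{i_{\tau(a)}}+\h$ after substitution and do not vanish at $\h=0$ (the $i$'s are distinct). For $r=01,11$ there is the extra denominator $\prod_{a=1}^k\prod_{b=1}^n(z_b-t_a+\h)^{-1}$; after $t_a\mapsto z_{i_{\tau(a)}}$ its $b=i_{\tau(a)}$ factors give $\h^{-k}$, which is cancelled exactly by the explicit $\h^k$ in the numerator of $U^{(r)}_J$. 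After this cancellation the remaining denominator $\prod_a\prod_{b\neq i_{\tau(a)}}(z_b-z_{i_{\tau(a)}}+\h)$ is again nonzero at $\h=0$, so the summand is regular there.

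Finally I would evaluate each summand at $\h=0$ and show it is the zero rational function whenever $J\neq I$. In each of the four cases, setting $\h=0$ turns the $a$-th ``principal factor'' $\prod_{b=1}^{j_a-1}(\cdots)\prod_{b=j_a+1}^{n}(z_b-z_{i_{\tau(a)}})$ into $\prod_{b\neq j_a}(z_b-z_{i_{\tau(a)}})$ up to sign, which is identically zero in $z$ unless $i_{\tau(a)}=j_a$; the remaining factors of $(\tau U^{(r)}_J)(z_I,z,0)$ are all regular and generically nonzero. Hence a summand can be non-zero only if $i_{\tau(a)}=j_a$ for every $a$, i.e. only if $I=J$ as sets. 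Since $J\neq I$, every summand vanishes at $\h=0$, so $W^{(r)}_J(z_I,z,0)=0$, which together with the reduction step proves the proposition.

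The one step that needs care — and the place where a naive termwise argument would fail — is the $r=01,11$ case: the individual summands $(\tau U^{(r)}_J)(z_I,z,\h)$ are singular at $\h=0$ as written, and one must first perform the $\h^{k}/\h^{k}$ cancellation above before specializing. Everything else is a routine inspection of the explicit products defining $U^{(r)}_J$, just as in the proof of Proposition~\ref{prop:principal}.
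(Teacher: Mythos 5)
Your proof is correct and follows essentially the same route as the paper: reduce to $\sigma=\id$ and verify the statement \emph{termwise} in the symmetrization $\sum_{\tau\in S_k}(\tau U^{(r)}_J)(z_I,z,\h)$, using the combinatorial structure of the numerators. The only cosmetic difference is that you phrase termwise divisibility by $\h$ as ``regular at $\h=0$ and vanishing there'' (carrying out the $\h^{k}/\h^{k}$ cancellation explicitly for $r=01,11$), whereas the paper asserts directly that each term is divisible by $\h$; the content is the same, and your version usefully fills in the details the paper leaves to inspection.
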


\begin{proposition}\label{prop:support}
The polynomial $W^{(r)}_{\sigma,J}(z_I,z,\h)$ is divisible by
$
\mathop{\prod_{1\leq b<a\leq n}}_{\spadesuit} (z_{\sigma(a)}-z_{\sigma(b)}+\h)
$
where $\spadesuit$ is as in \eqref{eq:spade}.
\end{proposition}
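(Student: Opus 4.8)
The plan is to mimic the proofs of Propositions~\ref{prop:poly} and~\ref{prop:principal}: first reduce to $\sigma=\id$, then establish a termwise order-of-vanishing estimate for the summands of $W^{(r)}_{J}(z_I,z,\h)$, using that this quantity is already known (Proposition~\ref{prop:poly}) to be a polynomial. For the reduction, set $w_j=z_{\sigma(j)}$; since $W^{(r)}_{\sigma,J}=W^{(r)}_{\sigma^{-1}(J)}(t_1,\dots,t_k,z_{\sigma(1)},\dots,z_{\sigma(n)})$ and the substitution $t_s=z_{i_s}$ means $\{t_1,\dots,t_k\}=\{w_j:j\in\sigma^{-1}(I)\}$, we get $W^{(r)}_{\sigma,J}(z_I,z,\h)=W^{(r)}_{\sigma^{-1}(J)}(w_{\sigma^{-1}(I)},w,\h)$, i.e.\ the $\sigma=\id$ specialization for the subsets $J'=\sigma^{-1}(J)$ and $I'=\sigma^{-1}(I)$ in the variables $w$. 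Under $w_a-w_b+\h=z_{\sigma(a)}-z_{\sigma(b)}+\h$ the divisor claimed for $(I',J',\id)$ becomes exactly the divisor claimed for $(I,J,\sigma)$, since clause $\spadesuit$ in~\eqref{eq:spade} is already phrased through $\sigma(a),\sigma(b)$. So we may assume $\sigma=\id$.

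Fix $\sigma=\id$ and write $J=\{j_1<\dots<j_k\}$ and $W=W^{(r)}_{J}(z_I,z,\h)$. Each linear form $\ell=z_a-z_b+\h$ appearing in the product $P$ of the statement is irreducible, and distinct such forms are non-associate; as $\C[z_1,\dots,z_n,\h]$ is a UFD, to prove $P\mid W$ it suffices to prove $\operatorname{ord}_\ell(W)\ge1$ for each such $\ell$. Expanding, $W=\sum_{\phi}U^{(r)}_{J}(z_{\phi})$, where $z_\phi=(z_{\phi(1)},\dots,z_{\phi(k)})$ and $\phi$ runs over the bijections $\{1,\dots,k\}\to I$ (the $t_s\mapsto z_{i_s}$ substitutions composed with the $S_k$-symmetrization); since $\operatorname{ord}_\ell$ of a sum is at least the minimum of the $\operatorname{ord}_\ell$ of its summands, it is enough to show, for every $\phi$ with $U^{(r)}_{J}(z_\phi)\neq0$ and every factor $\ell$ of $P$, that $\operatorname{ord}_\ell\!\big(U^{(r)}_{J}(z_\phi)\big)\ge1$.

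As in the proof of Proposition~\ref{prop:poly}, $U^{(r)}_{J}(z_\phi)\neq0$ precisely when $\phi(c)\le j_c$ for all $c$ (only the $\h$-free factors $z_d-z_{\phi(c)}$ with $d>j_c$ can vanish). Fix such a $\phi$ and a factor $\ell=z_a-z_b+\h$ of $P$ (so $b<a$ and $\spadesuit$ holds); one computes $\operatorname{ord}_\ell$ factor-by-factor in the numerator and denominator of $U^{(r)}_{J}(z_\phi)$, and I expect three cases. (a) For $r=00,10$: with $c_0=\phi^{-1}(a)$ one has $b<a\le j_{c_0}$, so the factor $z_{\phi(c_0)}-z_b+\h=\ell$ occurs in the block $\prod_{d<j_{c_0}}(z_{\phi(c_0)}-z_d+\h)$, while the denominator has $\operatorname{ord}_\ell=0$ (for $r=10$ it is $\h$-free; for $r=00$ its $\h$-factors are $z_i-z_{i'}+\h$ with $i,i'\in I$, whereas $\spadesuit$ forces $b\notin I$). (b) For $r=01,11$ with $b\notin I$ (all of $r=01$, and the subcases $b\in\bar{I}$ for $r=11$): the numerator contains the $t$-free block $\h^k\prod_{1\le m<M\le n}(z_M-z_m+\h)$, whose $(m,M)=(b,a)$ factor is $\ell$, while the denominator's only $\h$-part $\prod_{i\in I}\prod_{d}(z_d-z_i+\h)$ has $\operatorname{ord}_\ell=0$ because $b\notin I$. (c) For $r=11$ with $a,b\in I$ (the remaining subcase of the clause ``$\sigma(a)\in I$''): with $c_0=\phi^{-1}(a)$ and $c_1=\phi^{-1}(b)$, the numerator order of $\ell$ equals $1$ (from the block of (b)) plus $1$ if $a<j_{c_1}$ plus $1$ if $c_1<c_0$, whereas the denominator order of $\ell$ is $1$ (from $\prod_{i\in I}\prod_d(z_d-z_i+\h)$ at $d=a$, $i=b$); if the difference were $0$ we would need $c_1>c_0$ and $a\ge j_{c_1}$, but $c_1>c_0$ together with $j_1<\dots<j_k$ gives $j_{c_1}>j_{c_0}\ge\phi(c_0)=a$, a contradiction. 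In every case $\operatorname{ord}_\ell\!\big(U^{(r)}_{J}(z_\phi)\big)\ge1$, which finishes the proof.

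The only real work is this bookkeeping: after the substitution $t_s=z_{i_s}$, one must match the linear factors of the numerator and the denominator of each $U^{(r)}_{J}$ that become associate to a prescribed $\ell=z_a-z_b+\h$, and this has to be carried out for all four $r$. The one genuinely nontrivial point is case (c) for $r=11$ with $a,b\in I$, where the denominator copy of $\ell$ cancels a numerator copy, and survival of $\ell$ in the term relies on the monotonicity $j_1<\dots<j_k$.
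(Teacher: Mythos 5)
Your proof is correct and follows the same route as the paper's: reduce to $\sigma=\id$ and then argue \emph{termwise}, showing that each nonzero summand $(\tau U^{(r)}_J)(z_I,z,\h)$ of the symmetrization is already divisible by every factor $z_a-z_b+\h$ prescribed by $\spadesuit$. The paper states this termwise divisibility as a consequence of "the combinatorial structure of the numerator" without details, whereas you carry out the factor-matching explicitly (including the genuinely nontrivial $r=11$, $a,b\in I$ cancellation against the denominator, which you resolve correctly using $j_1<\cdots<j_k$).
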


\begin{proof} (Propositions  \ref{prop:h-div}, \ref{prop:support}.)
The $\sigma=\id$ special case implies the general case. For $\sigma=\id$ the proof continues the arguments given in the proof of Proposition \ref{prop:poly}.
There we claimed that in each of the terms $(\tau U^{(r)}_J)(z_I,z,\h)$ the denominator divides the numerator, hence is a polynomial. The combinatorial structure of the numerator also implies that each of these polynomials are divisible by $\h\cdot \mathop{\prod_{1\leq b<a\leq n}}_{\spadesuit} (z_{\sigma(a)}-z_{\sigma(b)}+\h)$.
\end{proof}

\subsection{R-matrix properties}

\noindent Let $s_{a,b}$ denote the transposition in $S_n$ switching $a$ with $b$. For $\sigma,\omega\in S_n$ the permutation $\sigma\omega$ means first applying $\omega$ then applying $\sigma$. For example, the permutation $\sigma s_{a,a+1}$ is obtained from $\sigma$ by switching the $\sigma(a)$ and $\sigma(a+1)$ values.
For $I\in \I_k$, $s_{u,v}\in S_n$ we define the set $s_{u,v}(I)\in \I_k$ to be obtained from $I$ by switching $u$ and $v$. In particular, if $u,v\in I$ or if $u,v\not\in I$ then $s_{u,v}(I)=I$.

\begin{theorem}\label{thm:generalR}
Let $k\leq n$, $\sigma\in S_n$, $I\in \I_k$, $a=1,\ldots,n-1$.
\begin{itemize}
\item If ($\sigma(a) \in I,\sigma(a+1)\not\in I$) or ($\sigma(a) \not\in I,\sigma(a+1)\in I$) then
\[
W^{(r)}_{\sigma s_{a,a+1},I}=
\frac{z_{\sigma(a)}-z_{\sigma(a+1)}}{z_{\sigma(a+1)}-z_{\sigma(a)}+\h}
W^{(r)}_{\sigma,I} +
\frac{\h}{z_{\sigma(a+1)}-z_{\sigma(a)}+\h}
W^{(r)}_{\sigma,s_{\sigma(a),\sigma(a+1)}(I)}
\]
for  $r=00,10,01,11$.
\item  If $\sigma(a),\sigma(a+1)\in I$ then
\[
W^{(r)}_{\sigma s_{a,a+1},I}=
\begin{cases}
\phantom{\frac{z_{\sigma(a)}-z_{\sigma(a+1)}+\h}{z_{\sigma(a+1)}-z_{\sigma(a)}+\h}}
W^{(r)}_{\sigma,I} & \text{ for } r=00,01, \\[.1in]
\frac{z_{\sigma(a)}-z_{\sigma(a+1)}+\h}{z_{\sigma(a+1)}-z_{\sigma(a)}+\h}
W^{(r)}_{\sigma,I} & \text{ for } r=10,11.
\end{cases}
\]
\item  If $\sigma(a),\sigma(a+1)\not\in I$ then
\[
W^{(r)}_{\sigma s_{a,a+1},I}=
\begin{cases}
\phantom{\frac{z_{\sigma(a)}-z_{\sigma(a+1)}+\h}{z_{\sigma(a+1)}-z_{\sigma(a)}+\h}}
W^{(r)}_{\sigma,I} & \text{ for } r=00,10, \\[.1in]
\frac{z_{\sigma(a)}-z_{\sigma(a+1)}+\h}{z_{\sigma(a+1)}-z_{\sigma(a)}+\h}
W^{(r)}_{\sigma,I} & \text{ for } r=01,11.
\end{cases}
\]
\end{itemize}
\end{theorem}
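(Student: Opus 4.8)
\emph{Reduction to $\sigma=\id$.} Since $W^{(r)}_{\sigma,I}=W^{(r)}_{\sigma^{-1}(I)}(t_1,\dots,t_k,z_{\sigma(1)},\dots,z_{\sigma(n)})$, each identity for general $\sigma$ is obtained from its $\sigma=\id$ instance (with $I$ replaced by $\sigma^{-1}(I)$) by relabelling $z_i\mapsto z_{\sigma(i)}$: the left side turns into $W^{(r)}_{\sigma s_{a,a+1},I}$, the displayed coefficients appear, and $s_{a,a+1}\bigl(\sigma^{-1}(I)\bigr)$ becomes $s_{\sigma(a),\sigma(a+1)}(I)$. So I fix $\sigma=\id$. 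Then $W^{(r)}_{s_{a,a+1},I}$ is simply $W^{(r)}_{s_{a,a+1}(I)}$ with the two variables $z_a,z_{a+1}$ interchanged; write $g\mapsto\widetilde g$ for that interchange. The theorem then says: for every $I$ with $|I\cap\{a,a+1\}|=1$,
\[
\widetilde{W^{(r)}_{I}}=\alpha\,W^{(r)}_{s_{a,a+1}(I)}+\beta\,W^{(r)}_{I},\qquad \alpha=\tfrac{z_a-z_{a+1}}{z_{a+1}-z_a+\h},\quad \beta=\tfrac{\h}{z_{a+1}-z_a+\h},
\]
and, for every $I$ with $\{a,a+1\}\subseteq I$ or $\{a,a+1\}\cap I=\emptyset$, $\widetilde{W^{(r)}_{I}}=c^{(r)}W^{(r)}_{I}$ with $c^{(r)}$ the scalar of the statement.

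\emph{Separating the $I$-independent part.} I would write $U^{(r)}_I=\mathcal N^{(r)}_I\cdot\mathcal D^{(r)}$, where $\mathcal N^{(r)}_I=\prod_{\ell=1}^k\bigl(\prod_{b<i_\ell}(\cdots)\prod_{b>i_\ell}(z_b-t_\ell)\bigr)$ is the ``arrow numerator'' (the only factor of $U^{(r)}_I$ that depends on $I$) and $\mathcal D^{(r)}$ collects the $t$--$t$ double product together with $\h^k\prod_{a<b}^n(z_b-z_a+\h)\prod_{\ell,b}\tfrac1{z_b-t_\ell+\h}$ when $r=01,11$. A line of inspection shows that the interchange multiplies $\mathcal D^{(r)}$ by a scalar $\epsilon_r$, equal to $1$ for $r=00,10$ and to $\tfrac{z_a-z_{a+1}+\h}{z_{a+1}-z_a+\h}$ for $r=01,11$ (the one non-symmetric factor being $(z_{a+1}-z_a+\h)$); moreover $\widetilde{\tau(\mathcal D^{(r)})}=\epsilon_r\,\tau(\mathcal D^{(r)})$ for every $\tau\in S_k$, since the interchange commutes with the $t$-permutation and $\epsilon_r$ involves no $t$'s.

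\emph{The $R$-matrix case.} Take $I$ with $a\in I$, $a+1\notin I$, and let $c$ be the index with $i_c=a$; set $I'=s_{a,a+1}(I)$, which (by the consecutivity of $a,a+1$) has the same sorted entries as $I$ except that its $c$-th one is $a+1$. Then for each $\tau\in S_k$ the $\tau$-terms of $\Sym_k U^{(r)}_I$ and $\Sym_k U^{(r)}_{I'}$ agree factor by factor except inside the $t_{\tau(c)}$-factor, where they agree up to a single linear factor: with $u=t_{\tau(c)}$ it equals $(z_{a+1}-u)$ in $\mathcal N^{(r)}_I$, equals $(u-z_a+\h)$ for $r=00,10$ and $(z_a-u+\h)$ for $r=01,11$ in $\mathcal N^{(r)}_{I'}$, and, after the interchange is applied to $\mathcal N^{(r)}_{I'}$, becomes $(u-z_{a+1}+\h)$ resp. $(z_{a+1}-u+\h)$. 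Factoring the common remainder $R_\tau$ (which carries $\tau(\mathcal D^{(r)})$ and picks up $\epsilon_r$ under the interchange) out of each $\tau$-term, the relation $\widetilde{W^{(r)}_{I'}}=\alpha W^{(r)}_{I}+\beta W^{(r)}_{I'}$ reduces termwise in $\tau$ to
\[
\epsilon_r(u-z_{a+1}+\h)=\alpha(z_{a+1}-u)+\beta(u-z_a+\h)\qquad(r=00,10),
\]
\[
\epsilon_r(z_{a+1}-u+\h)=\alpha(z_{a+1}-u)+\beta(z_a-u+\h)\qquad(r=01,11),
\]
both of which are elementary once one clears the denominator $z_{a+1}-z_a+\h$. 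This is the unified assertion for the subset $I'$; the assertion for $I$ follows by applying the interchange to it and solving the resulting $2\times2$ linear system, using $\tfrac{1-\widetilde\beta\,\beta}{\widetilde\alpha}=\alpha$ and $-\tfrac{\alpha\,\widetilde\beta}{\widetilde\alpha}=\beta$.

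\emph{The diagonal cases, and the main obstacle.} If $\{a,a+1\}\cap I=\emptyset$ then no $i_\ell$ lies strictly between the consecutive integers $a,a+1$, so $\mathcal N^{(r)}_I$ is symmetric under the interchange, only $\mathcal D^{(r)}$ moves, and $c^{(r)}=\epsilon_r$, which is the stated value. If $\{a,a+1\}\subseteq I$, say $i_c=a$ and $i_{c+1}=a+1$, the part of $U^{(r)}_I$ affected by the interchange is a product of two linear factors, one from the root attached to $a$ and one from the root attached to $a+1$; the plan is to pair $\tau$ with $\tau s_{c,c+1}$ in $\Sym_k$ (which interchanges those two roots) and combine the resulting two-term sum with the behaviour of the $t$--$t$ product under this transposition. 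This last step is the only real obstacle: one must track the transformation of the $t$--$t$ product (a sign, for $r=10,01$, where it is $\prod1/(t_{\ell'}-t_\ell)$; a sign times a factor $\tfrac{t_\ell-t_{\ell'}+\h}{t_{\ell'}-t_\ell+\h}$, for $r=00,11$) together with the scalar $\epsilon_r$ coming from $\mathcal D^{(r)}$, and check that the two contributions multiply to $c^{(r)}$ — in particular producing, for $r=10,11$, the factor $\tfrac{z_a-z_{a+1}+\h}{z_{a+1}-z_a+\h}$ that is not visible as an explicit factor of $U^{(r)}_I$. Keeping the four $(t_\ell-z_b+\h)$ versus $(z_b-t_\ell+\h)$ conventions straight throughout is what makes this bookkeeping, rather than the algebra, the delicate part.
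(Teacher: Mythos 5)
Your reduction to $\sigma=\id$, your treatment of the first bullet, and your treatment of the case $\sigma(a),\sigma(a+1)\notin I$ are correct: the termwise linear identities you display do hold (the common remainder $R_\tau$ is indeed symmetric under $z_a\leftrightarrow z_{a+1}$ up to the single factor $(z_{a+1}-z_a+\h)$ hidden in $\mathcal D^{(01)},\mathcal D^{(11)}$, and the $2\times2$ inversion closes the second sub-case). However, the case $\sigma(a),\sigma(a+1)\in I$ is not proved. You state a strategy --- pair $\tau$ with $\tau s_{c,c+1}$ --- and then explicitly defer the verification, calling it ``the only real obstacle.'' That verification is precisely where the content of the second bullet lives: it is the only place the scalars $c^{(r)}$, in particular the non-obvious factor $\frac{z_{\sigma(a)}-z_{\sigma(a+1)}+\h}{z_{\sigma(a+1)}-z_{\sigma(a)}+\h}$ for $r=10,11$ versus the factor $1$ for $r=00,01$, get produced. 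As written, the proposal does not establish that bullet, so the proof is incomplete.

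For what it is worth, your plan does close. With $i_c=a$, $i_{c+1}=a+1$, the two affected arrow factors are $g(t_c)\,g(t_{c+1})\,(z_{a+1}-t_c)(t_{c+1}-z_a+\h)$ with $g$ free of $z_a,z_{a+1}$, and for $r=10$ the only non-symmetric piece of the $t$--$t$ product on the pair $(c,c+1)$ is $1/(t_{c+1}-t_c)$; the paired sum then telescopes,
\[
\frac{(z_{a+1}-u)(v-z_a+\h)}{v-u}+\frac{(z_{a+1}-v)(u-z_a+\h)}{u-v}=z_{a+1}-z_a+\h ,
\]
which visibly acquires the factor $\frac{z_a-z_{a+1}+\h}{z_{a+1}-z_a+\h}$ under the interchange. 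For $r=00$ the denominator of the paired sum carries the additional factor $(v-u+\h)(u-v+\h)$ and one must check that the resulting two-term sum is symmetric in $z_a,z_{a+1}$ (giving $c^{(00)}=1$); the $r=01,11$ cases combine the analogous computation with $\epsilon_r$. None of this is hard, but it has to be done. Note also that the paper's own proof takes a different route: it reduces the statement algebraically to $n=2$, $\sigma=\id$, $a=1$ and then quotes the four explicit $4\times4$ matrix identities computed in Section~\ref{sec:weight}. Your direct general-$n$ argument avoids that (unspelled-out) reduction, but only if you actually finish the bookkeeping you postponed.
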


\begin{proof}
These statements reduce algebraically to the special case $n=2,\sigma=\id, a=1$. That special case is equivalent with the four matrix product identities---obtained by concrete calculations--- in Section \ref{sec:weight}.
\end{proof}

\section{Existence of super stable envelopes}\label{sec:existence}

We prove the existence of $\Stab^{(r)}_{\sigma}$ maps by proving the existence of $\kappa^{(r)}_{\sigma,I}\in H^*_{\T}( \Grkn)$ elements satisfying axioms {\bf A0, A1, A2, A3} of Section \ref{sec:stab}.
Let us recall our description of $H^*_T(\Grkn)$
\begin{equation*}
\begin{tikzcd}
\C[t_1,\ldots,t_k,z_1,\ldots,z_n]^{S_k} \ar[r,"q",twoheadrightarrow] &
H^*_A(\Grkn) \ar[r,"\Loc",hook] &
\bigoplus_{I\in \I_k} \C[z_1,\ldots,z_n].
\end{tikzcd}
\end{equation*}
from \eqref{eq:Hdesc}.

\begin{lemma}
The tuple
\[
\left( W^{(r)}_{\sigma,J}(z_I,z,\h) \right)_{I\in \I_k} \in \bigoplus_{I\in \I_k} \C[z_1,\ldots,z_n]
\]
belongs to the image of $\Loc$.
\end{lemma}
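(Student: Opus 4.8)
The plan is to verify the GKM condition of Section~\ref{sec:GKM} directly for the tuple in question: for $k$-subsets of the form $I=K\cup\{i\}$, $I'=K\cup\{j\}$ with $|K|=k-1$ and $i\neq j$, one must show that $W^{(r)}_{\sigma,J}(z_I,z,\h)-W^{(r)}_{\sigma,J}(z_{I'},z,\h)$ is divisible by $z_i-z_j$ in $\C[z_1,\ldots,z_n,\h]$; note that both terms are polynomials by Proposition~\ref{prop:poly}. First I would reduce to $\sigma=\id$. Writing $w_m=z_{\sigma(m)}$, the substitution rule of the definition gives $W^{(r)}_{\sigma,J}(z_I,z,\h)=W^{(r)}_{\sigma^{-1}(J)}(w_{\sigma^{-1}(I)},w,\h)$, where the ordering of $\sigma^{-1}(I)$ is irrelevant because $W^{(r)}_{\sigma^{-1}(J)}$ is symmetric in its $t$-variables. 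Since $\sigma^{-1}(I)$ and $\sigma^{-1}(I')$ again share a common $(k-1)$-subset and differ in a single element, and $z_i-z_j=w_{\sigma^{-1}(i)}-w_{\sigma^{-1}(j)}$, the desired divisibility for $W^{(r)}_{\sigma,J}$ and the pair $(I,I')$ is exactly the $\sigma=\id$ statement for $W^{(r)}_{\sigma^{-1}(J)}$ and the pair $(\sigma^{-1}(I),\sigma^{-1}(I'))$, after renaming $w$ back to $z$. So it suffices to treat $\sigma=\id$, arbitrary $J\in\I_k$, and an arbitrary pair $I=K\sqcup\{i\}$, $I'=K\sqcup\{j\}$ with $i,j\notin K$.

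Next I would record the shape of $W^{(r)}_J$. As in the proof of Proposition~\ref{prop:poly}, after symmetrizing $W^{(r)}_J=\Sym_k(U^{(r)}_J)$ the $(t_b-t_a)$ factors cancel, so $W^{(r)}_J=G/D$ with $G\in\C[t_1,\ldots,t_k,z_1,\ldots,z_n,\h]$ and $D$ a product of linear forms each of shape $(z_b-t_a+\h)$ or $(t_b-t_a+\h)$; both $G$ and $D$ are symmetric in $t_1,\ldots,t_k$. The key point is that the two substitutions $t\mapsto z_I$ and $t\mapsto z_{I'}$ agree modulo $z_i-z_j$. Indeed, since $i,j\notin K$, setting $z_i=z_j$ turns the multiset of linear forms $\{z_\ell:\ell\in I\}=\{z_\ell:\ell\in K\}\cup\{z_i\}$ into $\{z_\ell:\ell\in K\}\cup\{z_j\}=\{z_\ell:\ell\in I'\}$ (the latter multiset is unaffected by $z_i=z_j$); by the $t$-symmetry of $G$ and of $D$ this gives $G(z_I,z,\h)\equiv G(z_{I'},z,\h)$ and $D(z_I,z,\h)\equiv D(z_{I'},z,\h)$ modulo $z_i-z_j$.

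To conclude, set $P_I=W^{(r)}_J(z_I,z,\h)$ and $P_{I'}=W^{(r)}_J(z_{I'},z,\h)$, so that $P_I\,D(z_I,z,\h)=G(z_I,z,\h)$ and similarly for $I'$, with $P_I,P_{I'}$ polynomial by Proposition~\ref{prop:poly}. Pass to the integral domain $R=\C[z_1,\ldots,z_n,\h]/(z_i-z_j)$. There the image $\overline{D(z_I,z,\h)}=\overline{D(z_{I'},z,\h)}$ is nonzero, since each of its linear factors is either $\h$ or carries a $+\h$ and hence is not a multiple of $z_i-z_j$; combined with $\overline{G(z_I,z,\h)}=\overline{G(z_{I'},z,\h)}$ and cancellation in $R$, this forces $\overline{P_I}=\overline{P_{I'}}$, i.e.\ $(z_i-z_j)\mid(P_I-P_{I'})$. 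Hence the tuple satisfies the GKM condition and lies in the image of $\Loc$. I do not expect a genuine obstacle here; the one delicate step is the bookkeeping that every factor of $D$ retains its $+\h$ after $t\mapsto z_I$ and therefore survives in $R$ — which is precisely what makes the final cancellation legitimate — while everything else is formal once Proposition~\ref{prop:poly} is in hand.
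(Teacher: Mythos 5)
Your proposal is correct and follows essentially the same route as the paper's proof: verify the GKM condition by observing that the two substitutions $t\mapsto z_I$ and $t\mapsto z_{I'}$ agree after setting $z_i=z_j$, by the $t$-symmetry of $W^{(r)}_J$. Your extra care in checking that the denominator $D$ survives in $\C[z,\h]/(z_i-z_j)$ (every factor retains a $+\h$) is a worthwhile refinement of a point the paper's one-line argument glosses over, but it is not a different method.
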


\begin{proof}
First, the components of the tuple are indeed {\em polynomials}, according to Proposition \ref{prop:poly}.

We need to show that the tuple of polynomials satisfy the GKM condition. Let $U=K\cup \{i\}$, $V=K\cup \{j\}$ where $|K|=k-1$ and $i\not= j$. Substituting $z_i=z_j$ in
$W^{(r)}_{\sigma,J}(z_U,z,\h)$ and in $W^{(r)}_{\sigma,J}(z_V,z,\h)$ result identical functions. Hence
\[
\left.\left(W^{(r)}_{\sigma,J}(z_U,z,\h)- W^{(r)}_{\sigma,J}(z_V,z,\h)\right)\right|_{z_i=z_j}=0
\]
and, in turn, this implies that $z_i-z_j$ divides the difference $W^{(r)}_{\sigma,J}(z_U,z,\h)-W^{(r)}_{\sigma,J}(z_V,z,\h)$, what we wanted to prove.
\end{proof}

Therefore, the tuple $( W^{(r)}_{\sigma,J}(z_I,z,\h) )_{I\in \I_k}$ defines an element of $H^*_T(\Grkn)$, let us denote this element by $[ W^{(r)}_{\sigma,J}]$.

\begin{remark}
It is tempting to think that $W^{(r)}_{\sigma,J}$ represents $[ W^{(r)}_{\sigma,J} ]$ in the sense that $q(W^{(r)}_{\sigma,J})=[ W^{(r)}_{\sigma,J} ]$. This is, however, not correct because $W^{(r)}_{\sigma,J} \not \in \C[t_1,\ldots,t_k,z_1,\ldots,z_n]^{S_k}$. It is not a polynomial, it is a rational function (unless $r=10$). This rational function has the remarkable property that its $t=z_J$ substitutions are polynomials (satisfying the GKM condition), hence there is another function, this time a polynomial, whose $t=z_J$ substitutions are the same. That other polynomial would be the representative of $[ W^{(r)}_{\sigma,J} ]$ in $\C[t_1,\ldots,t_k,z_1,\ldots,z_n]^{S_k}$. For example for $n=4$ the element $\kappa^{\zo}_{\id,\{2\}}$ has the ``true'' polynomial representative given in \eqref{thatone}, but we named it with the rational function
\[
W^{\zo}_{\id,\{2\}}=\frac{\h\prod_{i=3}^4 (z_i-t_1) \prod_{1\leq i<j\leq 4} (z_j-z_i+\h)}{\prod_{i=2}^4(z_i-t_1+\h)}.
\]
The reader can verify that the polynomial in \eqref{thatone} and this rational function indeed have the same $t=z_J$ substitutions for all $J$.
\end{remark}

\begin{theorem} \label{thm:Stab=W}
The class
$
[
W^{(r)}_{\sigma,J}
]
$
satisfies the defining axioms for $\kappa^{(r)}_{\sigma,I}$.
\end{theorem}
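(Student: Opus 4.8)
The goal is to verify that the class $[W^{(r)}_{\sigma,J}]$ satisfies axioms \textbf{A0}--\textbf{A3}, where membership in $H^*_{\T}(\Grkn)$ has already been established by the previous lemma. Almost all the work has been done in Section \ref{sec:properties}: the defining data of $[W^{(r)}_{\sigma,J}]$ are exactly the substitution values $W^{(r)}_{\sigma,J}(z_I,z,\h)$ for $I\in\I_k$, and each of Propositions \ref{prop:principal}, \ref{prop:h-div}, \ref{prop:support} describes one of these values. So the plan is simply to translate each proposition into the corresponding axiom.

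First, \textbf{A0} (homogeneity of degree $d^{(r)}$). The function $W^{(r)}_{\sigma,I}$ is homogeneous of degree $d^{(r)}$ with $\deg z_i=\deg\h=\deg t_i=1$ by its construction in Section \ref{sec:weight} (this is the degree count recorded at the start of Section \ref{sec:properties}), and the substitution $t_s=z_{i_s}$ preserves homogeneity, so every component $W^{(r)}_{\sigma,I}(z_J,z,\h)$ is homogeneous of degree $d^{(r)}$. Hence the class $[W^{(r)}_{\sigma,J}]$ has the required homogeneous degree. Second, \textbf{A1}: by Proposition \ref{prop:principal} the ``diagonal'' restriction $W^{(r)}_{\sigma,I}(z_I,z,\h)$ equals the product $\prod_{\clubsuit}(z_{\sigma(b)}-z_{\sigma(a)})\cdot\prod_{\spadesuit}(z_{\sigma(a)}-z_{\sigma(b)}+\h)$, and the remark immediately following that proposition identifies the first product with $e^{(r),ver,\sigma-}_I$ and the second with $e^{(r),hor,\sigma-}_I$; this is precisely $\kappa^{(r)}_{\sigma,I}|_I=e^{(r),ver,\sigma-}_I e^{(r),hor,\sigma-}_I$.

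Third, \textbf{A2}: for $J\neq I$ the restriction $\kappa^{(r)}_{\sigma,I}|_J$ is represented by $W^{(r)}_{\sigma,I}(z_J,z,\h)$, which is divisible by $\h$ by Proposition \ref{prop:h-div} (applied with the roles of $I$ and $J$ as stated there). Fourth, \textbf{A3}: the restriction $\kappa^{(r)}_{\sigma,I}|_J = W^{(r)}_{\sigma,I}(z_J,z,\h)$ is divisible by $\prod_{\spadesuit}(z_{\sigma(a)}-z_{\sigma(b)}+\h)$ by Proposition \ref{prop:support}; since this product equals $e^{(r),ver,\sigma-}_J$ (by the same identification used for \textbf{A1}, now at the fixed point $p_J$ rather than $p_I$), we get divisibility by $e^{(r),ver,\sigma-}_J$ for all $J$ — noting that for $J=I$ this is automatic from \textbf{A1}.

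**Main obstacle.** There is no serious obstacle left: the theorem is a bookkeeping corollary of the three interpolation propositions, and the only point requiring a moment's care is matching the combinatorial index sets $\clubsuit$ and $\spadesuit$ of \eqref{eq:spade} with the tangent-weight descriptions of $e^{(r),hor,\sigma-}_J$ and $e^{(r),ver,\sigma-}_J$ from Section \ref{sec:stab} for each of the four values $r=00,10,01,11$ and at an arbitrary fixed point $p_J$ (not just $p_I$). I would carry out that four-case check explicitly but briefly, since the horizontal part is uniform in $r$ and the vertical parts were already tabulated.
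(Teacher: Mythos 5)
Your proposal matches the paper's own proof: the paper also deduces \textbf{A0} from the homogeneity of degree $d^{(r)}$ of $W^{(r)}_{\sigma,J}$ (preserved under the substitutions $t=z_I$), and cites Propositions \ref{prop:principal}, \ref{prop:h-div}, \ref{prop:support} for \textbf{A1}, \textbf{A2}, \textbf{A3} respectively, using the same identification of the $\clubsuit$ and $\spadesuit$ products with $e^{(r),ver,\sigma-}$ and $e^{(r),hor,\sigma-}$. Your extra care about matching the index sets at an arbitrary fixed point $p_J$ is a reasonable elaboration of what the paper leaves implicit.
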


\begin{proof}
The properties listed in Section \ref{sec:properties} verify the axioms {\bf A0-A3} required for $\kappa^{(r)}_{\sigma,I}$. Namely, the degree axiom {\bf A0} is implied by the fact that $W^{(r)}_{\sigma,J}$ has homogeneous degree $d^{(r)}$ (hence all its $t=z_I$ substitutions have that degree too). Axioms {\bf A1, A2, A3} are verified in Propositions~\ref{prop:principal},~\ref{prop:h-div},~\ref{prop:support}, respectively.
\end{proof}

\section{Geometrically defined local tensor coordinates on $(\C^2)^{\otimes n}$}

\subsection{Local tensor coordinates}\label{sec:ltc}

Let $v_1, v_2$ be the standard basis vectors of $\C^2$, and let us fix an element (``R-matrix'') $R(\zeta,\h) \in \End(\C^2 \otimes \C^2) \otimes \C(\zeta,\h)$. For $1\leq u,v \leq n$, $u\not= v$ let $R_{u,v}(\zeta)$ denote the element in $\End((\C^2)^{\otimes n})\otimes \C(\zeta)$ acting like $R(\zeta)$ in the $u$'th and $v$'th factor. Let us assume that the R-matrix satisfies the parametrized Yang-Baxter equation
\begin{equation}\label{eq:YB}
R_{12}(z_1-z_2) R_{13}(z_1-z_3) R_{23}(z_2-z_3) = R_{23}(z_2-z_3) R_{13}(z_1-z_3)R_{12}(z_1-z_2).
\end{equation}

\begin{definition}
Let $V$ be a vector space isomorphic with $(\C^2)^{\otimes n}$ (ie. of dimension $2^n$). A collection of linear isomorphisms
\[
\St_{\sigma} \in \Hom((\C^2)^{\otimes n},V)\otimes \C(z_1,\ldots,z_n,\h) \qquad\qquad \text{for }\sigma\in S_n
\]
is called {\em ``local tensor coordinates on $(\C^2)^{\otimes n}$''}, if for any $\sigma \in S_n$ and $a=1,\ldots,n-1$ we have
\[
\St^{-1}_{\sigma s_{a,a+1}} \circ \St_{\sigma}=R_{\sigma(a),\sigma(a+1)}(z_{\sigma(a+1)}-z_{\sigma(a)}).
\]
\end{definition}

For example the Yangain $\Y(\glv{2})$ action on $(\C^2)^{\otimes n}\otimes \C(z_1,\ldots,z_n,\h)$ induces local tensor coordinates on $(\C^2)^{\otimes n}$ with R-matrix
\begin{equation}\label{eq:Rproto}
\renewcommand\arraystretch{1.3}
R(\zeta)=
 \kbordermatrix{
& v_1\otimes v_1 & v_1\otimes v_2 & v_2\otimes v_1 & v_2\otimes v_2 \\
 v_1\otimes v_1 & \hskip .325 true cm 1 \hskip .325 true cm & 0 & 0 & 0 \\
 v_1\otimes v_2 & 0 & \frac{\zeta}{\h-\zeta} & \frac{\h}{\h-\zeta} & 0 \\
 v_2\otimes v_1 & 0 & \frac{\h}{\h-\zeta} & \frac{\zeta}{\h-\zeta} & 0 \\
 v_2\otimes v_2 & 0 & 0 & 0 & \hskip .325 true cm 1 \hskip .325 true cm
}
\in \End(\C^2 \otimes \C^2) \otimes \C(\zeta,\h).
\end{equation}
One of the achievements of \cite{MO} is a geometric construction of local tensor coordinates using the torus equivariant cohomology algebras of Nakajima quiver varieties. If the variety is $ \cup_k \rmTs\!\Grkn$ then the Maulik-Okounkov construction coincides with the $r=00$ case of what we will describe next.

\subsection{Super stable envelopes induce local tensor coordinates}\label{sec:StabLTC}
Let
\begin{equation}\label{eq:4R}
\renewcommand\arraystretch{1.3}
\begin{array}{ll}
R^{\zz}(\zeta)=
\left[\begin{array}{c|cc|c}
\hskip .23 true cm 1 \hskip .23 true cm & 0 & 0 & 0 \\ \hline
0 & \frac{\zeta}{\h-\zeta} & \frac{\h}{\h-\zeta} & 0 \\
0 & \frac{\h}{\h-\zeta} & \frac{\zeta}{\h-\zeta} & 0 \\\hline
0 & 0 & 0 & \hskip .23 true cm 1 \hskip .23 true cm
\end{array}\right],
&
R^{\oz}(\zeta)=
\left[\begin{array}{c|cc|c}
\hskip .23 true cm 1 \hskip .23 true cm & 0 & 0 & 0 \\ \hline
0 & \frac{\zeta}{\h-\zeta} & \frac{\h}{\h-\zeta} & 0 \\
0 & \frac{\h}{\h-\zeta} & \frac{\zeta}{\h-\zeta} & 0 \\\hline
0 & 0 & 0 & \frac{\h+\zeta}{\h-\zeta}
\end{array}\right],
\\
&
\\
R^{\zo}(\zeta)=
\left[\begin{array}{c|cc|c}
\frac{\h+\zeta}{\h-\zeta} & 0 & 0 & 0 \\ \hline
0 & \frac{\zeta}{\h-\zeta} & \frac{\h}{\h-\zeta} & 0 \\
0 & \frac{\h}{\h-\zeta} & \frac{\zeta}{\h-\zeta} & 0 \\\hline
0 & 0 & 0 & \hskip .23 true cm 1 \hskip .23 true cm
\end{array}\right],
&
R^{\oo}(\zeta)=
\left[\begin{array}{c|cc|c}
\frac{\h+\zeta}{\h-\zeta} & 0 & 0 & 0 \\ \hline
0 & \frac{\zeta}{\h-\zeta} & \frac{\h}{\h-\zeta} & 0 \\
0 & \frac{\h}{\h-\zeta} & \frac{\zeta}{\h-\zeta} & 0 \\\hline
0 & 0 & 0 & \frac{\h+\zeta}{\h-\zeta}
\end{array}\right].
\end{array}
\end{equation}
Each of these R-matrices satisfy the Yang-Baxter equation \eqref{eq:YB}.

\begin{remark} \label{rem:Rcheck}
In general, R-matrices come in two flavors, $R$ and $\check{R}$, whose relation is $\check{R}=P\circ R$ where $P$ is the operation permuting the factors of $\C^2\otimes \C^2$. Hence, the $\check{R}$ versions of the $R^{(r)}(\zeta)$ matrices above are obtained by replacing the middle $2\times 2$ submatrix
$\frac{1}{\h-\zeta} \begin{pmatrix}{\zeta} & \h \\ \h & \zeta \end{pmatrix}$
to
$\frac{1}{\h-\zeta} \begin{pmatrix} \h & {\zeta} \\ \zeta & \h  \end{pmatrix}$.
\end{remark}

In Section \ref{sec:Rmat} we will show the relationship between theses matrices and the Yangian R-matices of $\glsv{2}{0}, \glsv{1}{1}, \glsv{1}{1}, \glsv{0}{2}$ Lie superalgebras.

Now we give a geometric definition of local tensor coordinates on $(\C^2)^{\otimes n}$ with the R-matrices in \eqref{eq:4R}. For brevity, we will write $\C(z,\h)$ for $\C(z_1,\ldots,z_n,\h)$.

First, let us identify the vector spaces
\begin{equation}\label{eq23}
\begin{tikzcd}[row sep=0]
(\C^2)^{\otimes n} \otimes \C(z,\h)
\ar[r,equal] &
\bigoplus_{I\subset \{1,\ldots,n\}} \C(z,\h) \
\ar[r,equal] &
H_{\T}^*((\X_n^{(r)})^{\T})\otimes \C(z,\h) \\
\vin & \vin & \vin \\
v_{j_1}\otimes v_{j_2} \otimes \ldots \otimes v_{j_n} \ar[r,leftrightarrow]
& 1_I \ar[r,leftrightarrow] & 1\in H^*_{\T}(p_I)
\end{tikzcd}
\end{equation}
where $j_s=\begin{cases} 1 & \text{if } s\not\in I \\ 2 & \text{if } s\in I\end{cases}$. Recall from Section \ref{sec:Loc} that we defined $\Hb_n=H^*_{\T}(\X^{(r)}_n)\otimes \C(z,\h)$, and that the $\Loc$ map (see \eqref{eq:Loc1}) is an isomorphism from $\Hb_n$ to the vector space in \eqref{eq23}.

\begin{theorem}\label{thm:StabLTC}
Let $r\in \{00, 10, 01, 11\}$. The maps
\[
\Stab^{(r)}_{\sigma}: (\C^2)^{\otimes n} \otimes \C(z,\h)  \to \Hb_n \qquad\qquad (\sigma\in S_n)
\]
form local tensor coordinates on $(\C^2)^{\otimes n}$ with R-matrices given in \eqref{eq:4R}.
\end{theorem}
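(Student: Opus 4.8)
The plan is to reduce Theorem~\ref{thm:StabLTC} to the recursion Theorem~\ref{thm:generalR}, which is already established. First I would record the translation dictionary and the invertibility of the stable envelope maps. By Theorem~\ref{thm:Stab=W} the class $\Stab^{(r)}_\sigma(1_I)=\kappa^{(r)}_{\sigma,I}$ is the element $[W^{(r)}_{\sigma,I}]$ whose restriction to the torus fixed point $p_J$ is the polynomial $W^{(r)}_{\sigma,I}(z_J,z,\h)$. Hence, after composing with the isomorphism $\Loc$, the map $\Stab^{(r)}_\sigma$ is described by the matrix $\big(W^{(r)}_{\sigma,I}(z_J,z,\h)\big)_{J,I}$, which by axioms \textbf{A1}, \textbf{A2} is a diagonal matrix with entries $e_I^{(r),ver,\sigma-}e_I^{(r),hor,\sigma-}$ plus a matrix all of whose entries are divisible by $\h$; setting $\h=0$ its determinant reduces to a nonzero product of linear forms $z_j-z_i$, so it is a nonzero element of $\C(z,\h)$. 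Thus each $\Stab^{(r)}_\sigma$ is a linear isomorphism over $\C(z,\h)$ and the composite $(\Stab^{(r)}_{\sigma s_{a,a+1}})^{-1}\circ\Stab^{(r)}_\sigma$ is well defined.

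Next I would unwind the defining relation of local tensor coordinates. Fix $\sigma\in S_n$ and $a\in\{1,\ldots,n-1\}$, and write $\tau=\sigma s_{a,a+1}$, $u=\sigma(a)$, $v=\sigma(a+1)$, $\zeta=z_v-z_u$. The desired identity $(\Stab^{(r)}_{\tau})^{-1}\circ\Stab^{(r)}_\sigma=R^{(r)}_{u,v}(\zeta)$ is equivalent to $\Stab^{(r)}_\sigma=\Stab^{(r)}_\tau\circ R^{(r)}_{u,v}(\zeta)$; evaluating both sides on $1_I$ and then restricting to $p_J$ via $\Loc$ turns it into the scalar identity
\[
W^{(r)}_{\sigma,I}(z_J,z,\h)=\sum_{I'}\big(R^{(r)}_{u,v}(\zeta)\big)_{I',I}\,W^{(r)}_{\tau,I'}(z_J,z,\h),
\]
to hold for all $I,J\in\I_k$ and all $k$. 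Since $R^{(r)}(\zeta)$ acts only in the $u$-th and $v$-th tensor factors and, under the identification \eqref{eq23}, $1_I$ occupies the $v_1\otimes v_1$, $v_1\otimes v_2$, $v_2\otimes v_1$, or $v_2\otimes v_2$ slot in those factors according to the membership of $u,v$ in $I$, the only $I'$ that contribute are $I'=I$ and $I'=s_{u,v}(I)$, and the coefficients depend only on whether both of $u,v$ lie in $I$, neither does, or exactly one does. These are precisely the three cases of Theorem~\ref{thm:generalR}.

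It remains to match coefficients. When exactly one of $u,v$ lies in $I$, Theorem~\ref{thm:generalR} expresses the pair $\big(W^{(r)}_{\tau,I},W^{(r)}_{\tau,s_{u,v}(I)}\big)$ as $M(w)$ applied to $\big(W^{(r)}_{\sigma,I},W^{(r)}_{\sigma,s_{u,v}(I)}\big)$, where $M(w)=\tfrac1{\h-w}\bigl(\begin{smallmatrix}w&\h\\ \h&w\end{smallmatrix}\bigr)$ and $w=z_u-z_v=-\zeta$; since $M(w)^{-1}=M(-w)$, inverting this relation produces precisely the central $2\times2$ block $\tfrac1{\h-\zeta}\bigl(\begin{smallmatrix}\zeta&\h\\ \h&\zeta\end{smallmatrix}\bigr)$ of each $R^{(r)}(\zeta)$ in \eqref{eq:4R}. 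When $u,v\in I$ (resp.\ $u,v\notin I$), $1_I$ sits in the $v_2\otimes v_2$ (resp.\ $v_1\otimes v_1$) slot, Theorem~\ref{thm:generalR} gives $W^{(r)}_{\tau,I}=c\,W^{(r)}_{\sigma,I}$ with $c=1$ or $c=\tfrac{\h+w}{\h-w}$ according to $r$, and $c^{-1}$ equals the matching diagonal entry $1$ or $\tfrac{\h+\zeta}{\h-\zeta}$ of $R^{(r)}(\zeta)$ recorded in \eqref{eq:4R}. Since all four $R^{(r)}(\zeta)$ satisfy the Yang--Baxter equation \eqref{eq:YB} (stated right after \eqref{eq:4R}), all the conditions in the definition of local tensor coordinates are met, for every $r\in\{00,10,01,11\}$, and the theorem follows. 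The only real work is the bookkeeping of conventions in this last step — the order of composition, the direction in which Theorem~\ref{thm:generalR} is phrased ($W_\tau$ in terms of $W_\sigma$ versus its inverse), and the sign of the spectral parameter; all of the mathematical substance already resides in Theorem~\ref{thm:generalR} and the $n=2$ matrix identities on which it rests.
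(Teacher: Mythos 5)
Your proof is correct and follows essentially the same route as the paper: reduce the statement to the recursion of Theorem~\ref{thm:generalR}, use Theorem~\ref{thm:Stab=W} to translate between weight functions and stable-envelope classes, and match coefficients against the $R^{(r)}$ entries for the three membership cases. The paper obtains the needed inversion of the recursion by substituting $\sigma\mapsto\sigma s_{a,a+1}$ in Theorem~\ref{thm:generalR}, while you write it as $M(w)^{-1}=M(-w)$ — these are the same algebraic step — and your opening check that each $\Stab^{(r)}_\sigma$ is invertible over $\C(z,\h)$ is a worthwhile detail the paper leaves implicit.
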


\begin{proof}
Define the {\em geometric R-matrix}
\[
\cR^{(r)}_{\sigma,\omega}=(\Stab^{(r)}_{\omega})^{-1} \Stab^{(r)}_{\sigma}.
\]
What we need to prove is that
\[
\cR^{(r)}_{\sigma s_{a,a+1},\sigma}=
R^{(r)}_{\sigma(a),\sigma(a+1)}(z_{\sigma(a+1)}-z_{\sigma(a)}).
\]

If ($\sigma(a)\in I$ and $\sigma(a+1)\not\in I$) or ($\sigma(a)\not\in I$ and $\sigma(a+1)\in I$) then we have
\[
\Stab^{(r)}_{\sigma}(1_I)=
[W^{(r)}_{\sigma,I}] =
\left[
\frac{z_{\sigma(a+1)}-z_{\sigma(a)}}{z_{\sigma(a+1)}-z_{\sigma(a)}+\h} W^{(r)}_{\sigma s_{a,a+1},I} +
\frac{\h}{z_{\sigma(a+1)}-z_{\sigma(a)}+\h} W^{(r)}_{\sigma s_{a,a+1},s_{\sigma(a),\sigma(a+1)}(I)}\right]
\]
according to Theorem \ref{thm:Stab=W} and Theorem \ref{thm:generalR} (in fact, writing $\sigma s_{a,a+1}$ for $\sigma$ in Theorem~\ref{thm:generalR}).
Hence
\begin{multline*}
\cR^{(r)}_{\sigma s_{a,a+1},\sigma}(1_I)=
\frac{z_{\sigma(a+1)}-z_{\sigma(a)}}{z_{\sigma(a+1)}-z_{\sigma(a)}+\h} 1_I +
\frac{\h}{z_{\sigma(a+1)}-z_{\sigma(a)}+\h} 1_{s_{\sigma(a),\sigma(a+1)}(I)}
\\ =
R^{(r)}_{\sigma(a),\sigma(a+1)}(z_{\sigma(a+1)}-z_{\sigma(a)})(1_I).
\end{multline*}

If $\sigma(a), \sigma(a+1)\in I$ then
\[
\Stab^{(r)}_{\sigma}(1_I)=
[W^{(r)}_{\sigma,I}] =
\begin{cases}
\phantom{\frac{z_{\sigma(a+1)}-z_{\sigma(a)}+\h}{z_{\sigma(a)}-z_{\sigma(a+1)}+\h}}
[W^{(r)}_{\sigma s_{a,a+1},I}] & \text{ for } r=00,01 \\
\frac{z_{\sigma(a+1)}-z_{\sigma(a)}+\h}{z_{\sigma(a)}-z_{\sigma(a+1)}+\h}
[W^{(r)}_{\sigma s_{a,a+1},I}]  & \text{ for } r=10,11,
\end{cases}
\]
according to Theorem \ref{thm:Stab=W} and Theorem \ref{thm:generalR} (in fact, writing $\sigma s_{a,a+1}$ for $\sigma$ in Theorem~\ref{thm:generalR}). Hence
\[
\cR^{(r)}_{\sigma s_{a,a+1},\sigma}(1_I)=
\begin{cases}
\phantom{\frac{z_{\sigma(a+1)}-z_{\sigma(a)}+\h}{z_{\sigma(a)}-z_{\sigma(a+1)}+\h}}
1_I & \text{ for } r=00,01 \\
\frac{z_{\sigma(a+1)}-z_{\sigma(a)}+\h}{z_{\sigma(a)}-z_{\sigma(a+1)}+\h}
1_I  & \text{ for } r=10,11,
\end{cases}
\]
which is equal to $R^{(r)}_{\sigma(a),\sigma(a+1)}(z_{\sigma(a+1)}-z_{\sigma(a)})(1_I)$.

The proof of the $\sigma(a), \sigma(a+1)\not\in I$ case is analogous.
\end{proof}

\subsection{Super weight functions induce local tensor coordinates}
Theorem \ref{thm:StabLTC} could have been proved without mentioning super weight functions, essentially only using the axioms of super stable envelopes. However, the proof we gave in Section \ref{sec:StabLTC} has the advantage that it actually proves another statement. Let $\W_{\sigma,n}\subset \C(t_1,\ldots,t_k,z_1,\ldots,z_n,\h)$ be the $\C(z_1,\ldots,z_n,\h)$-span of the functions $W_{\sigma,I}$ for $I\subset \{1,\ldots,n\}$. In fact, according to Theorem \ref{thm:generalR} this space is independent of $\sigma$, hence we will denote it by $\W_n$.

\begin{theorem}\label{thm:WLTC}
Let $r\in \{00,10,01,11\}$. The maps
\[
\begin{tabular}{cccccccc}
$W^{(r)}_{\sigma}:$ & $(\C^2)^{\otimes n} \otimes \C(z,\h)$  & $\to$ & $\W_n$ &&&& ($\sigma\in S_n$) \\
& $1_I$ & $\mapsto$ & $W^{(r)}_{\sigma,I}$ &&
\end{tabular}
\]
form local tensor coordinates on $(\C^2)^{\otimes n}$ with R-matrices given in \eqref{eq:4R}.
\end{theorem}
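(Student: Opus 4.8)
The plan is to reproduce the proof of Theorem~\ref{thm:StabLTC} essentially word for word, with the stable envelope $\Stab^{(r)}_\sigma$ replaced by the weight-function map $W^{(r)}_\sigma$ and the cohomology class $[W^{(r)}_{\sigma,I}]$ replaced by the rational function $W^{(r)}_{\sigma,I}$ itself. This works because that proof already does all of its real work at the level of the rational functions: it rewrites $\Stab^{(r)}_\sigma(1_I)=[W^{(r)}_{\sigma,I}]$ via Theorem~\ref{thm:Stab=W} and then invokes the $R$-matrix recursion of Theorem~\ref{thm:generalR}, which is a statement about the functions $W^{(r)}_{\sigma,I}$ and is available before passing to cohomology. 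Concretely, I would reduce the theorem to two points: (i) each $W^{(r)}_\sigma$ is a $\C(z,\h)$-linear isomorphism onto $\W_n$, so that $\cR^{(r)}_{\sigma s_{a,a+1},\sigma}:=(W^{(r)}_{\sigma s_{a,a+1}})^{-1}\circ W^{(r)}_\sigma$ is defined; and (ii) $\cR^{(r)}_{\sigma s_{a,a+1},\sigma}=R^{(r)}_{\sigma(a),\sigma(a+1)}(z_{\sigma(a+1)}-z_{\sigma(a)})$ for every $\sigma\in S_n$ and $a=1,\dots,n-1$.

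For (i), first note that $\W_n$ is genuinely $\sigma$-independent (this is contained in Theorem~\ref{thm:generalR}, whose three cases each express $W^{(r)}_{\sigma s_{a,a+1},I}$ as an invertible $\C(z,\h)$-linear combination of $W^{(r)}_{\sigma,I}$ and $W^{(r)}_{\sigma,s_{\sigma(a),\sigma(a+1)}(I)}$, and the $s_{a,a+1}$ generate $S_n$), so the target in the statement makes sense. The composite $[\,\cdot\,]\circ W^{(r)}_\sigma$ sends $1_I$ to $[W^{(r)}_{\sigma,I}]=\kappa^{(r)}_{\sigma,I}$ by Theorem~\ref{thm:Stab=W}, i.e.\ it is the stable envelope $\Stab^{(r)}_\sigma$, which is an isomorphism: the classes $\kappa^{(r)}_{\sigma,I}$ form a $\C(z,\h)$-basis of $\Hb_n$ because, by axiom {\bf A2}, the localization matrix $\bigl(\kappa^{(r)}_{\sigma,I}|_J\bigr)_{I,J}$ is diagonal modulo $\h$, with diagonal entries $\kappa^{(r)}_{\sigma,I}|_I$ that by {\bf A1} and \eqref{eq:princ} specialize at $\h=0$ to nonzero products of linear forms $z_i-z_j$. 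Hence $W^{(r)}_\sigma$ is injective; it is surjective onto $\W_n$ by the definition of $\W_n$; therefore it is an isomorphism and $\dim_{\C(z,\h)}\W_n=2^n$. (One could also prove (i) self-contained by substituting $t=z_I$ and showing the evaluation matrix $\bigl(W^{(r)}_{\sigma,J}(z_I,z,\h)\bigr)_{I,J}$ is invertible, using Propositions~\ref{prop:poly}, \ref{prop:h-div}, \ref{prop:principal} and the same $\h=0$ specialization.)

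For (ii), I would run the three-case analysis exactly as in the proof of Theorem~\ref{thm:StabLTC}. Applying Theorem~\ref{thm:generalR} with $\sigma$ replaced by $\sigma s_{a,a+1}$ rewrites $W^{(r)}_{\sigma,I}$ as an explicit $\C(z,\h)$-combination of $W^{(r)}_{\sigma s_{a,a+1},I}$ and $W^{(r)}_{\sigma s_{a,a+1},s_{\sigma(a),\sigma(a+1)}(I)}$; the coefficients are precisely the entries of the middle $2\times 2$ block of $R^{(r)}$ when exactly one of $\sigma(a),\sigma(a+1)$ belongs to $I$, the $v_1\otimes v_1$ diagonal entry of $R^{(r)}$ when $\sigma(a),\sigma(a+1)\notin I$, and the $v_2\otimes v_2$ diagonal entry when $\sigma(a),\sigma(a+1)\in I$ (these three families of entries are exactly where the matrices in \eqref{eq:4R} differ with $r$, matching the $r$-dependence in Theorem~\ref{thm:generalR}). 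Reading off these coefficients and translating through the identification \eqref{eq23} ($1_I\leftrightarrow v_{j_1}\otimes\cdots\otimes v_{j_n}$) gives $\cR^{(r)}_{\sigma s_{a,a+1},\sigma}(1_I)=R^{(r)}_{\sigma(a),\sigma(a+1)}(z_{\sigma(a+1)}-z_{\sigma(a)})(1_I)$ for all $I$, which is the defining relation of local tensor coordinates; since each $R^{(r)}$ satisfies the Yang--Baxter equation~\eqref{eq:YB}, this completes the proof.

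I do not expect a serious obstacle: Theorem~\ref{thm:generalR} already packages the entire $R$-matrix behaviour at the level of the rational functions, so the statement is a transcription of the argument in Section~\ref{sec:StabLTC}. The only point not literally inherited from there is the invertibility of $W^{(r)}_\sigma$, handled above either by factoring through $\Stab^{(r)}_\sigma$ or by the $t=z_I$ evaluation-matrix computation; and the only bookkeeping that needs care is the sign/argument matching between the denominators $z_{\sigma(a+1)}-z_{\sigma(a)}+\h$ in Theorem~\ref{thm:generalR} and the argument $z_{\sigma(a+1)}-z_{\sigma(a)}$ fed into $R^{(r)}$, which is exactly the substitution "$\sigma\rightsquigarrow\sigma s_{a,a+1}$" already performed in the proof of Theorem~\ref{thm:StabLTC}.
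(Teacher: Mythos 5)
Your proposal is correct and matches the paper's own proof, which is exactly the one-line observation that the argument for Theorem~\ref{thm:StabLTC} goes through verbatim after replacing $\Stab^{(r)}_\sigma$ by $W^{(r)}_\sigma$ and $[W^{(r)}_{\sigma,I}]$ by $W^{(r)}_{\sigma,I}$. Your elaboration of the invertibility of $W^{(r)}_\sigma$ (either by factoring through $\Stab^{(r)}_\sigma$ or by the $t=z_I$ evaluation matrix) is a useful, correct filling-in of a point the paper leaves implicit, but the route is the same.
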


\begin{proof}
Our proof of Theorem \ref{thm:StabLTC} with the formal modification of writing $W^{(r)}_{\sigma}$ for $\Stab^{(r)}_{\sigma}$ and $W^{(r)}_{\sigma,I}$ for $[W^{(r)}_{\sigma,I}]$ proves this statement.
\end{proof}

The essence in this argument was that the R-matrix relations (Theorem \ref{thm:generalR}) hold not only for the cohomology classes $[W^{(r)}_{\sigma,I}]\in \Hb_n$ but for the rational functions $W^{(r)}_{\sigma,I} \in \W_n$ themselves: both skew arrows in the commutative diagram
\[
\begin{tikzcd}
& (\C^2)^{\otimes n} \otimes \C(z,\h) \arrow[dl,"W^{(r)}_\sigma"']  \arrow[dr,"\Stab^{(r)}_{\sigma}"] & \\
\W_n \arrow[rr,"{[\ ]}"] &  & \Hb_n.
\end{tikzcd}
\]
are local tensor coordinates.

That is, the super weight functions are {\em those} representatives 
of super stable envelopes  that respect the R-matrix property. Such a choice of representative for an interesting cohomology class follows the tradition started with Schubert polynomials \cite{LS}. Schubert polynomials represent fundamental classes of Schubert varieties, and these representatives are chosen in a way to be consistent with the Lascoux-Sch\"utzenberger recursion of Schubert classes.

\section{Yangian R-matrices of $\glsv{2}{0}, \glsv{1}{1}, \glsv{0}{2}$}\label{sec:Rmat}

Consider the four splittings
\[
\C\langle v_1\rangle_{\even} \oplus \C\langle v_2\rangle_{\even}, \quad
\C\langle v_1\rangle_{\even} \oplus \C\langle v_2\rangle_{\odd}, \quad
\C\langle v_1\rangle_{\odd} \oplus \C\langle v_2\rangle_{\even}, \quad
\C\langle v_1\rangle_{\odd} \oplus \C\langle v_2\rangle_{\odd}, \quad
\]
of $\C^2\langle v_1,v_2\rangle$. The corresponding Lie superalgebras are $\glsv{2}{0}, \glsv{1}{1},\glsv{1}{1}, \glsv{0}{2}$, where the middle two are of course isomorphic, and $\glsv{2}{0},\glsv{0}{2}$ are both isomorphic with the ordinary $\glv{2}$.  The Yangian R-matrices for these Lie superalgebras are all
\[
1+uP \in \End(\C^2 \otimes \C^2) \otimes \C(u)
\]
with the key difference that the ``permutation of factors'' operator, $P$, is meant with the usual convention: when odd vectors are permuted a $(-1)$-sign is introduced
\cite{Gow, Zhang1, Zhang2}.
Namely, the four Yangian R-matrices are (in the ordered basis $v_1\otimes v_1, v_1\otimes v_2, v_2\otimes v_1, v_2\otimes v_2$)
\begin{equation}\label{eq:YangianR}
\renewcommand\arraystretch{1.3}
\begin{array}{ll}
R_{00}(u)=
\left[\begin{array}{c|cc|c}
1+u  & 0 & 0 & 0 \\ \hline
0 & 1 & u & 0 \\
0 & u & 1 & 0 \\\hline
0 & 0 & 0 & 1+u
\end{array}\right],
&
R_{10}(u)=
\left[\begin{array}{c|cc|c}
1+u  & 0 & 0 & 0 \\ \hline
0 & 1 & \hskip .18 true cm u \hskip .18 true cm & 0 \\
0 & \hskip .18 true cm u \hskip .18 true cm & 1 & 0 \\\hline
0 & 0 & 0 & 1-u
\end{array}\right],
\\ & \\
R_{01}(u)=
\left[\begin{array}{c|cc|c}
1-u & 0 & 0 & 0 \\ \hline
0 & 1 & u & 0 \\
0 & u & 1 & 0 \\\hline
0 & 0 & 0 & 1+u
\end{array}\right], &
R_{11}(u)=
\left[\begin{array}{c|cc|c}
1-u  & 0 & 0 & 0 \\ \hline
0 & 1 & -u & 0 \\
0 & -u & 1 & 0 \\\hline
0 & 0 & 0 & 1-u
\end{array}\right].
\end{array}
\end{equation}
The $\check{R}$ version of R-matrices are obtained as $\check{R}=P\circ R$ (with the respective $P$ operator), hence they are
\begin{equation}\label{eq:YangianRc}
\renewcommand\arraystretch{1.3}
\begin{array}{ll}
\check{R}_{00}(u)=
\left[\begin{array}{c|cc|c}
\hskip .13 true cm 1+u  \hskip .13 true cm & 0 & 0 & 0 \\ \hline
0 & u & 1 & 0 \\
0 & 1 & u & 0 \\\hline
0 & 0 & 0 & 1+u
\end{array}\right],
&
\check{R}_{10}(u)=
\left[\begin{array}{c|cc|c}
\hskip .13 true cm 1+u  \hskip .13 true cm   & 0 & 0 & 0 \\ \hline
0 & u & \hskip .18 true cm 1 \hskip .18 true cm & 0 \\
0 & \hskip .18 true cm 1 \hskip .18 true cm & u & 0 \\\hline
0 & 0 & 0 & -1+u
\end{array}\right],
\\ & \\
\check{R}_{01}(u)=
\left[\begin{array}{c|cc|c}
-1+u & 0 & 0 & 0 \\ \hline
0 & u & 1 & 0 \\
0 & 1 & u & 0 \\\hline
0 & 0 & 0 & 1+u
\end{array}\right], &
\check{R}_{11}(u)=
\left[\begin{array}{c|cc|c}
-1+u  & 0 & 0 & 0 \\ \hline
0 & u & -1 & 0 \\
0 & -1 & u & 0 \\\hline
0 & 0 & 0 & -1+u
\end{array}\right].
\end{array}
\end{equation}
If we divide these $\check{R}$-matrices by $1+u$, and then substitute $u=-\h/\zeta$, then we obtain exactly the $\check{R}$ matrices of Remark \ref{rem:Rcheck}. Therefore, the $\check{R}$-matrices obtained from the geometry (namely, the super stable envelopes) of $\X_{n}^{(r)}$ spaces are the Yangian $\check{R}$-matrices of $\glsv{2}{0}$, $\glsv{1}{1}$, $\glsv{1}{1}$, $\glsv{0}{2}$.

\end{document}